\documentclass[10pt]{amsart}  
\usepackage{amssymb} 
\usepackage{vector} 
\usepackage{tikz}
\usepackage{multirow} 
\usepackage{rotating}
\usetikzlibrary{arrows}
\textwidth=6in
\textheight=8.50in
\oddsidemargin=0.00cm  
\evensidemargin=0.00cm  
\parskip6pt
\parindent0pt
                                                                                
\newtheorem{proposition}{Proposition}[section]
\newtheorem{lemma}[proposition]{Lemma}
\newtheorem{theorem}[proposition]{Theorem}
\newtheorem{corollary}[proposition]{Corollary}

\newtheorem{example}[proposition]{Example}
\newcommand{\adj}[1]{\mathop{\rm adj}(#1)}  
\newcommand{\reals}{\mathbb{R}}               

\newcommand{\N}{\mathcal{N}}                
\newcommand{\A}{\mathcal{A}}
\newcommand{\Q}{\mathcal{Q}}
\newcommand{\D}{\mathcal{D}}
\newcommand{\T}{\mathcal{T}}
\newcommand{\Z}{\mathcal{Z}}
\newcommand{\cB}{\mathcal{B}}       
\newcommand{\cW}{\mathcal{W}}

\newcommand{\C}{\mathcal{C}}
\newcommand{\LL}{\mathcal{L}}


\begin{document}

\tikzstyle{place}=[circle,draw=black!100,fill=black!100,thick,
inner sep=0pt,minimum size=1mm]
\tikzstyle{left}=[>=latex,<-,semithick]
\tikzstyle{right}=[>=latex,->,semithick]
\tikzstyle{nleft}=[>=latex,-,semithick]
\tikzstyle{nright}=[>=latex,-,semithick]

\title[Potentially Nilpotent Patterns and the Nilpotent-Jacobian Method]{Potentially Nilpotent Patterns\\ and the Nilpotent-Jacobian Method}
\author{Hannah Bergsma}  
\address{Department of Mathematics, Redeemer University College, Ancaster, ON
L9K 1J4 Canada}
\email{hbergsma@redeemer.ca}

\author{Kevin N. Vander Meulen}  
\address{Department of Mathematics, Redeemer University College, Ancaster, ON
L9K 1J4 Canada}
\email{kvanderm@redeemer.ca}
\urladdr{http://cs.redeemer.ca/math/kvhome.htm}

\author{Adam Van Tuyl}  
\address{Department of Mathematical Sciences \\  
Lakehead University \\  
Thunder Bay, ON P7B 5E1, Canada}  
\email{avantuyl@lakeheadu.ca}  
\urladdr{http://flash.lakeheadu.ca/$\sim$avantuyl/}  
  
\keywords{sign pattern matrix, nilpotent index, potentially nilpotent, spectrally aribitrary}  
\subjclass[2000]{15A18, 15A29, 05C50,  15B35 }  
\thanks{Version: \today}  

\begin{abstract}
A nonzero pattern is a matrix with entries in $\{0,*\}$. A pattern is potentially
nilpotent if there is some nilpotent real matrix with nonzero entries in precisely the 
entries indicated by the pattern. We develop ways to construct some potentially nilpotent
patterns, including some balanced tree patterns. 
We explore the index of some of the nilpotent matrices constructed,
and observe that some of the balanced trees are spectrally arbitrary 
using the Nilpotent-Jacobian method. 
Inspired by an argument in
[R. Pereira, Nilpotent matrices and spectrally arbitrary sign patterns. \emph{Electron. J. Linear Algebra}  {\bf 16}  (2007), 232--236], we also uncover a feature of the 
Nilpotent-Jacobian method.  In particular, we show that if $N$ is the
nilpotent matrix employed in this method to show that a pattern
is a spectrally arbitary pattern, then $N$ must have full index.

\end{abstract}

\maketitle

\section{Introduction and Terminology}
A (zero-nonzero) {\emph{pattern}} $\A$ is a square matrix whose entries come from
the set $\{*,0\}$ where $*$ denotes a nonzero entry.  
We then set 
\[Q(\A) = \{ A \in M_n(\reals) ~|~ (A)_{i,j} \neq 0 \Leftrightarrow 
(\A)_{i,j} = * ~~\mbox{for all $i,j$}\}.\]
An element $A \in Q(\A)$
is called a {\it matrix realization} of $\A$. 
A \emph{sign pattern} $\A$ is a square matrix whose entries come from
the set $\{ +,-,0\}$. While the results in this paper are written
in terms of (zero-nonzero) patterns, each of the results also
apply to sign patterns. 

A pattern $\A$ is said to be {\it potentially
nilpotent} if there exists a matrix $A \in Q(\A)$ such that 
$A^k = 0$ for some positive integer $k$, that is, if there is a nilpotent
matrix in $Q(\A)$.  In Sections~\ref{construction} and~\ref{construction2} we describe some
new constructions of potentially nilpotent patterns. The need for constructions of
potentially nilpotent patterns was described in~\cite{EJ}; the recent 
paper~\cite{Catral} summarizes much of the current knowledge about potentially
nilpotent patterns.

If $N$ is a nilpotent matrix, then the {\emph{index}} of $N$
is the smallest positive integer $k$ such that $N^k = 0$. We say
a pattern $\N$ \emph{allows index $k$} if there is some nilpotent matrix $N\in Q(\N)$ 
with index $k$. We say an $n \times n$ pattern $\N$ \emph{allows full index} if
$\N$ allows index $n$. Recent work~\cite{EK} has focused on indices allowed by
(sign) patterns.

A pattern $\A$ is \emph{spectrally arbitrary} if any self-conjugate multiset of complex numbers is the spectrum
of some $A\in Q(\A)$. Colloquially, $\A$ is spectrally arbitrary if $\A$ allows any set of eigenvalues.
If a pattern is spectrally arbitrary then it is also potentially nilpotent but the converse
is not true. 

A pattern $\A$ is \emph{symmetric} if $\A^T=\A$. 
Note that it is
possible to have a nonsymmetric matrix $A\in Q(\A)$ when 
$\A$ is symmetric; $A\in Q(\A)$ is said to be combinatorially symmetric if $\A$ is symmetric. If a pattern $\A$ is symmetric, 
the graph of $\A$, denoted $G(\A)$, is the simple graph of the adjaceny matrix
obtained from $\A$ by replacing each $*$ by $1$. A pattern $\A$ is said to
have a loop at vertex $i$ if $\A_{i,i}\neq 0$.
Some of the constructions in this paper result in potentially nilpotent
symmetric patterns whose graphs are trees. 
Tridiagonal patterns of order $n$
$$\T_n=
\begin{bmatrix}
*     &*     &0     &\cdots&0\\
*     &0     &\ddots&\ddots&\vdots\\
0     &\ddots&\ddots&\ddots&0   \\
\vdots&\ddots&\ddots&0    &* \\      
0     &\cdots&0     &*     &*  
\end{bmatrix},
$$
correspond to a path graph on $n$ vertices with loops on each leaf. 
These patterns are known~\cite{Drew,Elsner} to be spectrally arbitray for
$2\leq n \leq 16$, and are conjectured~\cite{Drew} to be spectrally aribitrary
(and hence potentially nilpotent) for all $n\geq 2$. 
Patterns of order $n$ whose graph is a star with loops at each leaf,
$$\Z_n=
\begin{bmatrix}
0&*&\cdots&\cdots&*\\
*&*&0&\cdots&0\\
\vdots&0&\ddots&\ddots&\vdots\\
\vdots&\vdots&\ddots&\ddots&0\\
*&0&\cdots&0&*\\
\end{bmatrix},
$$
are known~\cite{MTvdD} to be spectrally arbitrary and hence potentially nilpotent 
for all $n\geq 3$. 
In Section~\ref{PNtree}, we describe some potentially nilpotent tree
patterns which generalize the star patterns.

Determining if a pattern is potentially nilpotent is often a
key step in determining if a pattern is spectrally arbitrary. 
The paper~\cite{Drew} initiated the study of spectrally arbitary 
patterns and described a method for determining that a pattern is spectrally arbitrary called the Nilpotent-Jacobian method (see Section~\ref{full} below).
Finding an appropriate nilpotent matrix is the first step in the method and has been described
as the Achilles' heel of the Nilpotent-Jacobian method in~\cite{CaversKimShader}.
Pereira~\cite{P}, using the Nilpotent-Jacoiban method, 
demonstrated that certain patterns (with lots of nonzero entries) that allow full index were spectrally arbitrary. In Section~\ref{full}
we demonstrate the necessity of the full index hypothesis for successfully
employing the Nilpotent-Jacobian method on a spectrally aribitrary pattern (regardless of
the quantity of nonzero entries):

\textbf{Theorem~\ref{big}}: {\emph{Let $\A$ be a $n\times  n$ pattern. Suppose that one can show that $\A$ is spectrally arbitrary using
the Nilpotent-Jacobian method. If $N$ is the nilpotent matrix in $Q(\A)$ used in the Nilpotent-Jacobian
method, then $N$ has index $n$.}}

\section{A Construction for Potentially Nilpotent Patterns}\label{construction}

In this section we introduce a way to construct new potentially nilpotent patterns based upon
known potentially nilpotent patterns.
Let $\A_{m}(\N)$ be the $n \times n$ pattern 
\begin{equation}\label{pattern}
\A_{m}(\N)=\begin{bmatrix} 
0 		& \ell^{T} 	&		& \cdots 	& \ell^{T}	\\
\ell    & \N	& 0		& \cdots& 0		\\
\ell   & 0 	& \ddots& \ddots&\vdots	\\
\ell  	& \vdots& \ddots&		& 0		\\
\ell & 0 	& \cdots&	0	& \N	
\end{bmatrix}
\end{equation}
where $\ell^T=[* ~ 0 \cdots 0]$, $\N$ is a $s \times s$ pattern and $n=ms+1$.

\begin{figure}[h]
\begin{tikzpicture}
\node (1) at (0,0)[place]{};
\node (2) at (0,0.5)[place]{};
\node (3) at (0,-0.5)[place]{};
\node (4) at (-0.353,0.853)[place]{};
\node (5) at (0.353,0.853)[place]{};
\node (6) at (-0.353,-0.853)[place]{};
\node (7) at (0.353,-0.853)[place]{};
\draw [] (1) to (2);
\draw [] (1) to (3);
\draw [] (2) to (4);
\draw [] (2) to (5);
\draw [] (3) to (6);
\draw [] (3) to (7);
\end{tikzpicture}
\hspace{15mm}
\begin{tikzpicture}
\node (1) at (0,0)[place]{};
\node (2) at (0,0.5)[place]{};
\node (3) at (0,-0.5)[place]{};
\node (4) at (-0.353,0.853)[place]{};
\node (5) at (0,1)[place]{};
\node (6) at (0.353,0.853)[place]{};
\node (7) at (-0.353,-0.853)[place]{};
\node (8) at (0.353,-0.853)[place]{};
\node (9) at (0,-1)[place]{};
\draw [] (1) to (2);
\draw [] (1) to (3);
\draw [] (2) to (4);
\draw [] (2) to (5);
\draw [] (2) to (6);
\draw [] (3) to (7);
\draw [] (3) to (8);
\draw [] (3) to (9);
\end{tikzpicture}
\hspace{15mm}
\begin{tikzpicture}
\node (1) at (0,0)[place]{};
\node (2) at (0,0.5)[place]{};
\node (3) at (0,-0.5)[place]{};
\node (4) at (-0.2939,0.9045)[place]{};
\node (5) at (-0.4755,0.6546)[place]{};
\node (6) at (0.2939,0.9045)[place]{};
\node (7) at (0.4755,0.6546)[place]{};
\node (8) at (-0.2939,-0.9045)[place]{};
\node (9) at (-0.4755,-0.6546)[place]{};
\node (10) at (0.2939,-0.9045)[place]{};
\node (11) at (0.4755,-0.6546)[place]{};
\draw [] (1) to (2);
\draw [] (1) to (3);
\draw [] (2) to (4);
\draw [] (2) to (5);
\draw [] (2) to (6);
\draw [] (2) to (7);
\draw [] (3) to (8);
\draw [] (3) to (9);
\draw [] (3) to (10);
\draw [] (3) to (11);
\end{tikzpicture}
\caption{The graphs $G(\A_2(\Z_3))$, $G(\A_2(\Z_4))$ and $G(\A_2(\Z_5))$.}\label{graphs}
\end{figure}

Let $A_m(N_1,N_2,\ldots,N_m)\in \Q(\A_m(\N))$ be the $n \times n$ matrix 
\begin{equation}\label{matrix}
A_{m}(N_1,N_2,\ldots,N_m)=\begin{bmatrix} 
0 		& \bvec{e}^{T} 	&		& \cdots 	& \bvec{e}^{T}	\\
\bvec{a}& N_{1}	& 0		& \cdots& 0		\\
\bvec{e}& 0 	& \ddots& \ddots&\vdots	\\
\vdots 	& \vdots& \ddots&		& 0		\\
\bvec{e}& 0 	& \cdots&	0	& N_{m}	
\end{bmatrix}
\end{equation}
where $N_{1}, N_{2}, \ldots ,N_{m}$ are $s\times s$ nilpotent matrices, 
$m \geq 2$,  $\bvec{e}^T=\begin{bmatrix}1&0&\cdots&0\end{bmatrix}$, and $\bvec{a}=(1-m)\bvec{e}$.
In the results developed in this
paper, we explore nilpotent realizations $A_m(N_1,\ldots,N_m)\in Q(\A_m(\N))$ with
$N_1=N_2=\cdots=N_m$. We will adopt the notation $A_m(N)$ to denote the matrix $A_m(N,N,\ldots,N)$.
The patterns $\A_m(\T_2)$ with $m\geq 3$ are called 2-generalized star patterns in~\cite{GS}.


While the next theorem follows from the more detailed result~Theorem~\ref{index}, we
include a direct argument here. The patterns $\A_2(\Z_3), \A_2(\Z_4)$ and $\A_2(\Z_5)$
with graphs in Fig.~\ref{graphs} satisfy this theorem; in fact we will see
in Section~\ref{full} that they are spectrally arbitrary patterns.

\begin{theorem}\label{PN}
If $m\geq 2$ and $\N$ is potentially nilpotent, then $\A_m(\N)$ is potentially nilpotent.
\end{theorem}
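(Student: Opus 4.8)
The plan is to produce an explicit nilpotent realization of $\A_m(\N)$ by recycling the nilpotent matrix the hypothesis hands us. Since $\N$ is potentially nilpotent, fix a nilpotent $N \in Q(\N)$ and form $A_m(N) = A_m(N,N,\ldots,N) \in Q(\A_m(\N))$ as in \eqref{matrix}. It then suffices to show that this single realization $A_m(N)$ is itself nilpotent, and I would do this by proving that its characteristic polynomial is $\lambda^n$.

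First I would compute $\det(\lambda I - A_m(N))$ by viewing $A_m(N)$ as a bordered block-diagonal (``arrowhead'') matrix. Writing $\lambda I - A_m(N)$ in the form $\left[\begin{smallmatrix} \lambda & \bvec{u}^T \\ \bvec{v} & D \end{smallmatrix}\right]$ with $D = \operatorname{diag}(\lambda I - N, \ldots, \lambda I - N)$, the Schur complement formula gives, for $\lambda \neq 0$,
\[
\det(\lambda I - A_m(N)) = \det(D)\,\bigl(\lambda - \bvec{u}^T D^{-1}\bvec{v}\bigr).
\]
Because $N$ is nilpotent, $\det(\lambda I - N) = \lambda^s$, so $\det(D) = \lambda^{ms} = \lambda^{n-1}$. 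The crux is the term $\bvec{u}^T D^{-1}\bvec{v}$: here $\bvec{u}^T$ is $m$ copies of $-\bvec{e}^T$, while $\bvec{v}$ stacks $-\bvec{a} = (m-1)\bvec{e}$ in the first block and $-\bvec{e}$ in each of the remaining $m-1$ blocks. Since all diagonal blocks coincide, every block contributes a scalar multiple of the same quantity $c(\lambda) := \bvec{e}^T(\lambda I - N)^{-1}\bvec{e}$, with weight $-(m-1)$ from the first block and $+1$ from each of the other $m-1$ blocks. These weights sum to zero, so $\bvec{u}^T D^{-1}\bvec{v} = 0$ identically, and thus $\det(\lambda I - A_m(N)) = \lambda^{n-1}\cdot\lambda = \lambda^n$. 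Hence $A_m(N)$ is nilpotent and $\A_m(\N)$ is potentially nilpotent.

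The only technical point to watch is that the Schur complement formula requires $D$ invertible, i.e. $\lambda \neq 0$; I would handle this by establishing the identity $\det(\lambda I - A_m(N)) = \lambda^n$ for all $\lambda \neq 0$ and then invoking that two polynomials agreeing at infinitely many points coincide. The one genuinely substantive step, however, is recognizing that the weights in the first block column sum to zero: this is exactly the feature engineered into the definition by the choice $\bvec{a} = (1-m)\bvec{e}$, and it is what forces the bordering to contribute nothing to the characteristic polynomial when all the diagonal blocks are equal. Everything else is the routine arrowhead determinant computation, so I expect this cancellation to be the heart of the argument.
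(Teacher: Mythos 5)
Your proof is correct, and it establishes exactly what the paper's proof establishes --- that the single realization $A_m(N)$ built from a nilpotent $N \in Q(\N)$ has characteristic polynomial $x^n$ --- but it reaches that identity by a different determinant mechanism. The paper expands $\det(xI - A_m(N))$ by cofactors along the first column: each $(ks+2,1)$-minor equals, up to sign, one and the same polynomial $p_{N'}(p_N)^{m-1}$, where $N'$ is $N$ with its first row replaced by $\bvec{e}^T$, and these identical contributions enter with weights summing to $a + (m-1) = 0$. You instead use the Schur complement of the arrowhead form, where the same cancellation appears as the weights $-(m-1)$ and $+1$ (repeated $m-1$ times) on the common resolvent scalar $\bvec{e}^T(\lambda I - N)^{-1}\bvec{e}$. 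The two computations are algebraically close cousins, since $\det(D)\,\bvec{u}^T D^{-1}\bvec{v} = \bvec{u}^T \adj{D}\,\bvec{v}$ unwinds to a cofactor expansion; in both arguments the heart is precisely the cancellation you identified, engineered by the choice $\bvec{a} = (1-m)\bvec{e}$. What the paper's route buys is that it is a polynomial identity from the start: it needs no invertibility hypothesis and hence no limiting argument, at the cost of introducing the auxiliary matrix $N'$ and tracking the signs of the minors. Your route avoids $N'$ and the sign bookkeeping entirely, but must restrict to $\lambda \neq 0$ (legitimate, since nilpotence of $N$ makes $\lambda I - N$ invertible for every $\lambda \neq 0$) and then close the gap by observing that two polynomials agreeing at infinitely many points coincide --- a step you correctly flag and handle.
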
 

\begin{proof} Let $N\in Q(\N)$ be a nilpotent matrix. Consider the matrix $A=A_{m}(N)\in Q(\A_m(\N))$. 
Let $N'$ be the matrix obtained from $N$ by replacing the first row with $\bvec{e}^T=\begin{bmatrix} 1&0&\cdots&0\end{bmatrix}.$
Then we can determine $p_A=p_A(x)$, the characteristic polynomial of $A$, by cofactor expansion along the first column. Note
that for $0\leq k\leq (m-1)$, the $(ks+2,1)$ cofactor of $xI-A$ is 
$$(-1)^{(ks+2)+1}\det(xI-A)_{ks+2,1}$$ where the $(ks+2,1)$-minor is $$\det(xI-A)_{ks+2,1}=(-1)^{ks}(p_{N^{'}})(p_{N})^{m-1}.$$
Thus
\begin{align*}
p_A(x) & = \det(xI-A)\\
		& = x(p_{N})^{m} + (-1)^{2+1}(-a)(p_{N^{'}})(p_{N})^{m-1} + (m-1)(p_{N^{'}})(p_{N})^{m-1}  \\
		& = x(p_{N})^{m} \mbox{\rm{\quad since $a=(1-m)$ \quad}}\\
		& = x^n  \mbox{\rm{\quad since $N$ is nilpotent \quad}} 
\end{align*}
Thus $A$ is nilpotent and so $\A_m$ is potentially nilpotent.
\end{proof}

Note that if $\N$ is a potentially nilpotent pattern, then so is $P\N P^T$ for
any permutation matrix $P$, but $\A_m(P\N P^T)$ is not permutationally equivalent
to $\A_m(P\N P^T)$ in general. Thus Theorem~\ref{PN} provides a wide class
of potentially nilpotent patterns.

We will next develop an observation about the index of the matrix $A_m(N)$ when $N$ is nilpotent.

\begin{lemma}\label{powerlemma} Suppose $m\geq 2$, 
and $A=A_{m}(N)$. 
Then for all $t\geq 1$, 
\begin{equation}\label{expo}
A^{t}= 
\left[\begin{array}{cc}
0 & \bvec{f}_t  \\ 
\bvec{g}_t& B_t		 \\
\end{array}\right]
\end{equation}
where $\bvec{f}_t=\begin{bmatrix}\bvec{e}^{T}N^{t-1}&\cdots&\bvec{e}^{T}N^{t-1}\end{bmatrix}$,
$$ \bvec{g}_t =\begin{bmatrix}
aN^{t-1}\bvec{e} \\ N^{t-1}\bvec{e} \\ \vdots \\ N^{t-1}\bvec{e}
\end{bmatrix}, \qquad 
B_t=(-m) \begin{bmatrix} W_{t} & \cdots & W_{t} \\
& &\\
& 0 &
\end{bmatrix}
+ \begin{bmatrix}
W_{t} & \cdots & W_{t}\\
\vdots & \ddots & \vdots\\
W_{t} & \cdots & W_{t}
\end{bmatrix}
+  \begin{bmatrix} N^{t} & & 0 \\
& \ddots & \\
0&& N^{t}
\end{bmatrix}, 
$$
$a=(1-m)$, $W_{t}=\displaystyle\sum\limits_{i=0}^{t-2} N^{i}\bvec{e}\bvec{e}^{T}N^{(t-2)-i}$ for $t\geq 2$ and $W_1=0$.
\end{lemma}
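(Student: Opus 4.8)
The plan is to prove~(\ref{expo}) by induction on $t$. The base case $t=1$ is immediate: since $N^{0}=I$, the claimed $\bvec{f}_1$ and $\bvec{g}_1$ are exactly the top block-row and left block-column of $A$ displayed in~(\ref{matrix}), while $W_1=0$ collapses the first two summands of $B_1$ and leaves $B_1=\mathrm{diag}(N,\dots,N)$; comparing with~(\ref{matrix}) confirms~(\ref{expo}) for $t=1$. For the inductive step, suppose~(\ref{expo}) holds for some $t\geq 1$ and compute $A^{t+1}=A\,A^{t}$ using the $2\times 2$ block partition in which $A$ itself is given by $\bvec{f}_1,\bvec{g}_1,B_1$ and $A^{t}$ by $\bvec{f}_t,\bvec{g}_t,B_t$. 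Block multiplication produces four pieces, namely $\bvec{f}_1\bvec{g}_t$, $\bvec{f}_1 B_t$, $B_1\bvec{g}_t$, and $\bvec{g}_1\bvec{f}_t+B_1B_t$, each of which must be matched against the $t+1$ version of the formula.

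Three of these are quick. The $(1,1)$ entry is $\bvec{f}_1\bvec{g}_t=a\,\bvec{e}^{T}N^{t-1}\bvec{e}+(m-1)\,\bvec{e}^{T}N^{t-1}\bvec{e}$, which vanishes precisely because $a=1-m$; this is the first place the special value of $\bvec{a}$ is used. The top-right block is $\bvec{f}_1 B_t$: feeding the row of $\bvec{e}^{T}$'s into the three summands of $B_t$, the $-m$-weighted first block-row and the all-ones array of $W_t$'s cancel, leaving $\bvec{e}^{T}N^{t}$ in each block, i.e.\ $\bvec{f}_{t+1}$. The bottom-left block is $B_1\bvec{g}_t=\mathrm{diag}(N,\dots,N)\,\bvec{g}_t$, which merely advances each $N^{t-1}\bvec{e}$ to $N^{t}\bvec{e}$ and so equals $\bvec{g}_{t+1}$.

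The genuine computation, which I expect to be the main obstacle, is the $(2,2)$ block $\bvec{g}_1\bvec{f}_t+B_1B_t=B_{t+1}$. The summand $\bvec{g}_1\bvec{f}_t$ is the block matrix whose blocks are $a\,\bvec{e}\bvec{e}^{T}N^{t-1}$ in the first block-row and $\bvec{e}\bvec{e}^{T}N^{t-1}$ in the remaining rows, while $B_1B_t=\mathrm{diag}(N,\dots,N)\,B_t$ left-multiplies each block of $B_t$ by $N$, turning every $W_t$ into $NW_t$ and each $N^{t}$ into $N^{t+1}$. The key algebraic identity is the recursion
$$W_{t+1}=N W_t+\bvec{e}\bvec{e}^{T}N^{t-1},$$
obtained by re-indexing the defining sum $W_{t+1}=\sum_{i=0}^{t-1}N^{i}\bvec{e}\bvec{e}^{T}N^{(t-1)-i}$ and peeling off its $i=0$ term. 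Substituting this into the target $B_{t+1}$ splits each $W_{t+1}$ into $NW_t$ (matching $B_1B_t$) plus the rank-one piece $\bvec{e}\bvec{e}^{T}N^{t-1}$; one then checks that the $-m$-weighted first block-row together with the all-ones array of these rank-one pieces reproduces $\bvec{g}_1\bvec{f}_t$ exactly, the first block-row contributing $(1-m)\bvec{e}\bvec{e}^{T}N^{t-1}=a\,\bvec{e}\bvec{e}^{T}N^{t-1}$ and the lower rows contributing $\bvec{e}\bvec{e}^{T}N^{t-1}$. This bookkeeping — reconciling the rank-one corrections from $\bvec{g}_1\bvec{f}_t$ with the $W$-recursion, once more via $a=1-m$ — is the only delicate point. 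Note that nilpotency of $N$ is never invoked in establishing the formula; it enters only afterward, when one sets $t$ large enough to conclude $A^{t}=0$.
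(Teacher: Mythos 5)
Your proof is correct and takes essentially the same route as the paper's: induction on $t$ via the $2\times 2$ block partition, with the $(1,1)$ cancellation $\bvec{f}_1\bvec{g}_t=0$ coming from $a=1-m$, and the $(2,2)$ block handled by the recursion $W_{t+1}=NW_t+\bvec{e}\bvec{e}^{T}N^{t-1}$, which is precisely the identity $LN^{t-1}+NW_t=W_{t+1}$ the paper invokes. You simply spell out some block computations (e.g.\ the cancellation inside $\bvec{f}_1B_t$) that the paper leaves implicit.
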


\begin{proof} We proceed by induction on $t$. Observe that $A=A^1$ satisfies (\ref{expo}) since
$W_1=0$. 
To minimize notation, we let  
$L= \bvec{e}\bvec{e}^{T} =\begin{bmatrix}1&\\
&0 
\end{bmatrix}. $

We observe that $A^{t+1}=AA^t$ satisfies (\ref{expo}) because
$$\bvec{f}_1\bvec{g}_t = a \bvec{e}^TN^{t-1}\bvec{e}+(m-1)\bvec{e}^TN^{t-1}\bvec{e}=0,$$
$\bvec{f}_1B_t=\bvec{f}_{t+1}$, $B_1\bvec{g}_t=\bvec{g}_{t+1}$ and
$$
\bvec{g}_1\bvec{f}_t+B_1B_t=
\begin{bmatrix} 
aLN^{t-1}&\cdots&aLN^{t-1}\\
LN^{t-1} & \cdots & LN^{t-1}\\
\vdots & \ddots & \vdots\\
LN^{t-1} & \cdots & LN^{t-1}
\end{bmatrix}
+  \begin{bmatrix} N & & 0 \\
& \ddots & \\
0&& N
\end{bmatrix}B_t=B_{t+1}
$$
since $LN^{t-1}+NW_{t}=W_{t+1}$.
\end{proof}

\begin{theorem}\label{index}
Suppose $m\geq 2$, $A=A_{m}(N),$ 
and $N$ is nilpotent of index $k$. Then $A$ is nilpotent of index $r$ for some $r\leq 2k+1$.
Further, if both the first row and column of $N^{k-1}$ contain at least one nonzero entry,
then $A$ has index  $r=2k+1$. 
\end{theorem}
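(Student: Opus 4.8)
The plan is to read off the index of $A$ directly from the closed form for $A^t$ provided by Lemma~\ref{powerlemma}. Writing $A^t=\begin{bmatrix} 0 & \bvec{f}_t \\ \bvec{g}_t & B_t \end{bmatrix}$, the equation $A^t=0$ is equivalent to the three conditions $\bvec{f}_t=0$, $\bvec{g}_t=0$, and $B_t=0$. Since $\bvec{f}_t$ is just $m$ copies of $\bvec{e}^T N^{t-1}$ and $\bvec{g}_t$ is built from $N^{t-1}\bvec{e}$ (with the scalar $a=1-m\neq 0$ in front of one block), the first two conditions collapse to $\bvec{e}^T N^{t-1}=0$ and $N^{t-1}\bvec{e}=0$. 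First I would record the elementary fact that each summand $N^{i}\bvec{e}\,\bvec{e}^{T}N^{(t-2)-i}$ of $W_t$ is the outer product of the first column of $N^{i}$ with the first row of $N^{(t-2)-i}$, so it vanishes whenever $i\geq k$ or $(t-2)-i\geq k$.

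The next step is to reduce $B_t=0$ to conditions purely on $N$. Inspecting the block structure in Lemma~\ref{powerlemma}, the $(i,j)$ block of $B_t$ equals $W_t+\delta_{ij}N^{t}$ for $i\neq 1$ and $(1-m)W_t+\delta_{1j}N^{t}$ for $i=1$. Any off-diagonal block in a row $i\neq 1$ equals $W_t$, forcing $W_t=0$, and then any diagonal block forces $N^{t}=0$; conversely these two conditions clearly give $B_t=0$. Hence $A^t=0$ if and only if $\bvec{e}^T N^{t-1}=0$, $N^{t-1}\bvec{e}=0$, $N^{t}=0$, and $W_t=0$. For the upper bound I would take $t=2k+1$: then $t-1=2k\geq k$ and $t\geq k$ give the first three conditions immediately from $N^{k}=0$, while every summand $N^{i}\bvec{e}\,\bvec{e}^{T}N^{(2k-1)-i}$ of $W_{2k+1}$ vanishes, since surviving would require $i\leq k-1$ and $(2k-1)-i\leq k-1$ simultaneously, i.e. $i\geq k$, which is impossible. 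Thus $A^{2k+1}=0$, so $A$ is nilpotent with $r\leq 2k+1$.

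For the lower bound, assuming the first row and first column of $N^{k-1}$ are both nonzero, I would evaluate $W_{2k}=\sum_{i=0}^{2k-2}N^{i}\bvec{e}\,\bvec{e}^{T}N^{(2k-2)-i}$. A term survives only when $i\leq k-1$ and $(2k-2)-i\leq k-1$, which forces $i=k-1$, so $W_{2k}=(N^{k-1}\bvec{e})(\bvec{e}^{T}N^{k-1})$. This is the outer product of a nonzero column and a nonzero row, hence nonzero, so the $(2,1)$ block of $B_{2k}$ is nonzero and $A^{2k}\neq 0$. Since $A^{2k+1}=0$ already forces the index to be at most $2k+1$, and $A^{2k}\neq 0$ rules out any smaller index, we conclude $r=2k+1$.

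The computations are all bookkeeping; the one point that needs care is the window analysis for $W_t$, namely pinning down exactly which summation indices $i$ keep both factors $N^{i}\bvec{e}$ and $\bvec{e}^{T}N^{(t-2)-i}$ nonzero. Everything hinges on the observation that these index windows are disjoint once $t\geq 2k+1$ and overlap in the single index $i=k-1$ when $t=2k$; this is what simultaneously produces the bound $r\leq 2k+1$ and its sharpness under the stated hypothesis. I expect no difficulty from the value of $m$, since the blocks $(2,1)$ and $(1,2)$ (which equal $\pm W_{2k}$) exist for every $m\geq 2$.
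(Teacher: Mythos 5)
Your proof is correct and takes essentially the same approach as the paper: both rest on Lemma~\ref{powerlemma}, obtain $A^{2k+1}=0$ because $W_{2k+1}$ (and the relevant powers $N^{2k}$, $N^{2k+1}$) vanish, and obtain sharpness because the index-window analysis collapses $W_{2k}$ to the single term $N^{k-1}\bvec{e}\,\bvec{e}^{T}N^{k-1}$, a nonzero outer product under the row/column hypothesis. The only difference is expository: you spell out a full characterization of when $A^t=0$, while the paper simply evaluates the two relevant powers $t=2k$ and $t=2k+1$.
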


\begin{proof} Let $L= \bvec{e}\bvec{e}^{T}$. Since $N^k=0$, it follows from Lemma~\ref{powerlemma} that 
$A^{2k}=\begin{bmatrix}
0&\mathbf{0}^T\\
\mathbf{0}& B_{2k}
\end{bmatrix}$
with 
$$B_{2k}=(-m) \begin{bmatrix} W_{2k} & \cdots & W_{2k} \\
& &\\
& 0 &
\end{bmatrix}
+\begin{bmatrix}
W_{2k} & \cdots & W_{2k}\\
\vdots & \ddots & \vdots\\
W_{2k} & \cdots & W_{2k}
\end{bmatrix}.$$
But $W_{2k}=N^{k-1}LN^{k-1} \neq 0$ if the first row and column of $N^{k-1}$ both contain a nonzero entry.
Hence $A^{2k}\neq 0$, but
$A^{2k+1}=0$ since $W_{2k+1}=0$. 
\end{proof}

\section{Another Construction for Potentially Nilpotent Patterns}\label{construction2}

We next describe a slight variation of the general construction given
in Section~\ref{construction}, to give another construction which builds potentially
nilpotent patterns from known potentially nilpotent patterns.

Let $\N$ by an $s\times s$ pattern, and $\C_m(\N)$ be the $ms \times ms$ pattern
$$\C_m(\N)=\begin{bmatrix} 
\N 		& \LL		&\cdots&\cdots & \LL\\
\LL  	& \N 	&0 	   &\cdots &0		\\
\LL       &0      &\ddots&\ddots &\vdots        \\
\vdots  & \vdots&\ddots&\ddots &0     \\
\LL       & 0     &\cdots&0       & \N
\end{bmatrix}
$$
where $\LL=\ell\ell^T$ and $\ell=\begin{bmatrix}*&0&\cdots&0\end{bmatrix}$.
The pattern $\C_2(\Z_s)$ is an example of a \emph{double star pattern}. 
A recent paper~\cite{LiLi} discusses the potential nilpotence of some
double star (sign) patterns.

Let $N$ by an $s\times s$ matrix, $m\geq 3$ and $C_m(N)\in Q(\C_m(\N))$ be the $ms \times ms$ matrix
$$C_m(N)=\begin{bmatrix} 
N 		& L		&\cdots&\cdots & L\\
aL  	& N 	&0 	   &\cdots &0		\\
L       &0      &\ddots&\ddots &\vdots        \\
\vdots  & \vdots&\ddots&\ddots &0     \\
L       & 0     &\cdots&0       & N
\end{bmatrix}
$$
where $a=2-m$, and $L=\bvec{e}\bvec{e}^T.$

\begin{lemma}\label{powerC} 
Suppose $m\geq 3$, $R_2=R_1=0$, $W_1=W_0=0$, and
$W_{t}=\displaystyle\sum\limits_{i=0}^{t-2} N^{i}LN^{(t-2)-i}$ for $t\geq 2$. 
If $C=C_m(N)$, then for all $t\geq 1$,
\begin{equation}\label{expo2}
C^t=
 \begin{bmatrix} 
 N^{t} & & 0 \\
 & \ddots & \\
 0&& N^{t}
 \end{bmatrix} 
+ \begin{bmatrix}
 0 & W_{t+1}&\cdots & W_{t+1}\\
 aW_{t+1} & aK_t &\cdots & aK_t\\
 W_{t+1}  & K_t    &\cdots  & K_t  \\
 \vdots   & \vdots    &\ddots  &\vdots    \\   
 W_{t+1} & K_t     & \cdots &K_t \\
 \end{bmatrix}
\end{equation}
where $K_t=LW_t+R_t$, and for $t\geq 3$,
\begin{equation}\label{R}
R_{t}=\displaystyle\sum\limits_{i=0}^{t-3}\displaystyle\sum\limits_{j=0}^{t-3-i} N^{j+1}LN^{(t-3)-j-i}LN^{i}.
\end{equation}
\end{lemma}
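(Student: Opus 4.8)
The plan is to prove (\ref{expo2}) by induction on $t$, paralleling the proof of Lemma~\ref{powerlemma}. For the base case $t=1$ one checks that the right-hand side collapses to $C$: here $W_2=L$, while $W_1=R_1=0$ forces $K_1=LW_1+R_1=0$, so the second matrix reduces to the off-diagonal blocks of $C=C_m(N)$ and the first matrix supplies the diagonal blocks $N$. For the inductive step I would write $C^{t+1}=C\,C^t$ and expand the product block by block, using that the only nonzero blocks of $C=C^1$ are $N$ on the diagonal, $L$ throughout the first block-row and first block-column, and the single exception $aL$ in the $(2,1)$ block.

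Collecting terms, every block of the product reduces to one of two recursions together with the cancellation $a+m-2=0$. The first is the identity $W_{t+1}=LN^{t-1}+NW_t$ already used in Lemma~\ref{powerlemma}, which governs each block equal to a $W$. The second, new recursion is
\begin{equation*}
K_{t+1}=LW_{t+1}+NK_t ,
\end{equation*}
which is exactly what the blocks in rows and columns $2,\dots,m$ produce. The relation $a=2-m$ enters through the first block-row of $C$, whose $m-1$ copies of $L$ meet the first block-column of $C^t$: in the $(1,1)$ block the terms $aLW_{t+1}$ and $(m-2)LW_{t+1}$ cancel, and in each $(1,q)$ block the terms $aLK_t$ and $(m-2)LK_t$ cancel, since $a+m-2=0$, leaving precisely $N^{t+1}$ and $W_{t+2}$. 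This is the analogue of the vanishing of $\bvec{f}_1\bvec{g}_t$ in the proof of Lemma~\ref{powerlemma}.

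The main obstacle is confirming that the compact recursion $K_{t+1}=LW_{t+1}+NK_t$ is consistent with the stated closed form $K_t=LW_t+R_t$. Since the $LW$ terms match automatically, this amounts to the single summation identity
\begin{equation*}
R_{t+1}=NLW_t+NR_t ,
\end{equation*}
which I would verify directly from the double sum (\ref{R}). Multiplying $R_t$ on the left by $N$ shifts its leading exponent and yields exactly the $j\geq 1$ terms of $R_{t+1}$, while the $j=0$ terms of $R_{t+1}$ agree, after reindexing $i\mapsto (t-2)-i$, with $NLW_t=\sum_{i=0}^{t-2}NLN^{i}LN^{(t-2)-i}$; the base values $R_1=R_2=0$ start the recursion correctly, since the formula then gives $R_3=NLW_2=NL$. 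Once this identity is in hand, the remaining block computations are routine bookkeeping and the induction closes.
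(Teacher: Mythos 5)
Your proposal is correct and takes essentially the same route as the paper: induction on $t$ with base case $K_1=LW_1+R_1=0$, $W_2=L$, and inductive step $C^{t+1}=CC^t$ driven by the recursions $W_{t+2}=NW_{t+1}+LN^t$ and $R_{t+1}=N(LW_t+R_t)$ (your $K_{t+1}=LW_{t+1}+NK_t$ is exactly this identity, since $K_{t+1}=LW_{t+1}+R_{t+1}$). The paper only sketches these steps, whereas you additionally verify the $R$-recursion against the closed-form double sum in~(\ref{R}) and make the cancellation $a+m-2=0$ explicit; both verifications are accurate.
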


\begin{proof}Let $C=C_m(N)$.
We sketch a proof by induction on $t$. Note that $C=C^1$ has the form~(\ref{expo2}) since
$K_1=LW_1+R_1=0$ and $W_2=L$.
To check that $C^{t+1}$ is of form~(\ref{expo2}) one can use the induction 
hypothesis with 
$C^{t+1}=CC^{t}$, noting that
$W_{t+2}=NW_{t+1}+LN^t$ and $R_{t+1}=N(LW_t+R_t)$.
\end{proof}

\begin{theorem} If $m\geq 3$ and $\N$ is potentially nilpotent, then
$\C_m(\N)$ is potentially nilpotent. In fact, if $\N$
allows index $k$, then
$\C_m(\N)$ allows index $r$ for some $r\leq 3k$.
\end{theorem}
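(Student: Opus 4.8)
The plan is to produce an explicit nilpotent realization of $\C_m(\N)$ and bound its index using the power formula of Lemma~\ref{powerC}. Since $\N$ allows index $k$, I would fix a nilpotent $N\in Q(\N)$ with $N^k=0$ and set $C=C_m(N)$. First I would verify that $C\in Q(\C_m(\N))$: the diagonal blocks are $N\in Q(\N)$, while every off-diagonal block is either $L=\bvec{e}\bvec{e}^T$ or $aL$ with $a=2-m\neq 0$ (this is where $m\geq 3$ is used), and both $L$ and $aL$ have exactly the same zero-nonzero pattern as $\LL=\ell\ell^T$. Hence it suffices to show that $C$ is nilpotent with $C^{3k}=0$.

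By Lemma~\ref{powerC}, the $(1,1)$ diagonal block of $C^t$ is $N^t$, the other diagonal blocks are $N^t+aK_t$ and $N^t+K_t$, and the off-diagonal blocks are scalar multiples of $W_{t+1}$ and $K_t$. Thus, since $a\neq 0$, it is enough to check that the three matrices $N^t$, $W_{t+1}$, and $K_t=LW_t+R_t$ all vanish once $t\geq 3k$. The key observation is that each of $W_s$ and $R_s$ is a sum of alternating products of powers of $N$ with copies of $L$ inserted, and that within each summand the exponents of $N$ add up to a fixed total depending only on $s$. Because $N^k=0$, any summand in which some exponent reaches $k$ is zero, so an entire sum vanishes as soon as its fixed exponent-total is too large to be split among the available factors with every factor of order below $k$.

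Concretely, in $W_s=\sum_{i=0}^{s-2}N^iLN^{(s-2)-i}$ the two exponents sum to $s-2$; if $s-2>2(k-1)$ then some exponent is at least $k$ in every term, so $W_s=0$. This yields $W_{t+1}=0$ for $t\geq 2k$ and $W_t=0$ (hence $LW_t=0$) for $t\geq 2k+1$. In $R_s$ the three exponents $(j+1)$, $(s-3)-j-i$, and $i$ sum to $s-2$ as well, so if $s-2>3(k-1)$, that is $s\geq 3k$, then some factor is a power of $N$ of order at least $k$ and $R_s=0$. Taking $t=3k$ I would then conclude $N^t=0$ (as $3k\geq k$), $W_{t+1}=0$ (as $3k\geq 2k$), $LW_t=0$ (as $3k\geq 2k+1$), and $R_t=0$, whence $K_t=LW_t+R_t=0$. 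Therefore $C^{3k}=0$, so $C$ is nilpotent of some index $r\leq 3k$, and $\C_m(\N)$ allows index $r\leq 3k$; in particular it is potentially nilpotent.

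I do not expect a genuine obstacle here: the argument is a uniform pigeonhole count on exponent-totals. The only point requiring care is the bookkeeping in the double sum defining $R_t$, where one must confirm that the three exponents sum to $t-2$ independently of $i$ and $j$, so that the vanishing conclusion applies simultaneously to every summand.
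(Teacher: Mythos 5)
Your proof is correct and takes essentially the same route as the paper: both fix a nilpotent $N\in Q(\N)$ of index $k$, form $C=C_m(N)$, invoke Lemma~\ref{powerC}, and use the pigeonhole count on exponent-totals (each summand of $W_t$ splits $t-2$ over two powers of $N$, each summand of $R_t$ splits $t-2$ over three) to get $W_{t+1}=0$, $K_t=LW_t+R_t=0$, and hence $C^{3k}=0$. Your extra checks --- that $a=2-m\neq 0$ (so $C\in Q(\C_m(\N))$, which is where $m\geq 3$ enters) and the explicit block-by-block vanishing --- are details the paper leaves implicit, but the argument is the same.
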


\begin{proof} Let $C=C_m(N)\in Q(\C_m(\N))$ for some 
matrix $N$ which is nilpotent of index $k$.

Note that each summand in $R_t$ (from line (\ref{R})) is of the form $N^aLN^bLN^c$ such that $a+b+c=t-2$.  
If $t-2 \geq 3k-2$, then $a \geq k$ or $b \geq k$ or $c \geq k$.
Hence $R_t=0$ when $t \geq 3k$. 

Since $W_t=0$ for $t>2k$, Lemma~\ref{powerC} implies that $C^{3k}=0.$ 
 \end{proof}

\section{Potentially Nilpotent Balanced Tree Patterns}\label{PNtree}

In this section we explore some examples of tree patterns which can be shown to be 
potentially nilpotent using the construction in Section~\ref{construction}.
We call a tree $T$ {\it balanced} if there exists a designated root $r$ with deg$(r) \geq 2$ and  deg$(x)=$deg$(y)$ for all $x,y \in V(T)$ with dist$(x,r)=$dist$(y,r)$. 
In this paper, we call a pattern $\A$ a \emph{balanced tree pattern} if $G(\A)$ is a balanced tree
 and a vertex of $G(\A)$ has a loop if and only if the vertex is a leaf of $G(\A)$. 
Note that $\T_n$ is a balanced tree pattern for $n$ odd (by designating
the middle vertex of the path as the root).
Likewise, for $n\geq 3$, the star patterns $\Z_n$ with a loop at each leaf and no loop at the root 
are examples of balanced tree patterns. The star pattern $\Z_n$ has been shown to be potentially nilpotent for $n\geq 3$ (see~\cite[Theorem 3.1]{MTvdD}).
We demonstrate that other balanced tree patterns are also potentially nilpotent. 

\begin{figure}[h]
\begin{tikzpicture}
\node (1) at (0,0)[place]{};
\node (2) at (0,0.45)[place]{};
\node (3) at (0,0.65)[place]{};
\node (4) at (-0.177,0.577)[place]{};
\node (5) at (0.177,0.577)[place]{};
\node (6) at (-0.1765,0.8265)[place]{};
\node (7) at (0.1765,0.8265)[place]{};
\node (8) at (-0.3538,0.7537)[place]{};
\node (9) at (-0.457,0.577)[place]{};
\node (10) at (0.3538,0.7537)[place]{};
\node (11) at (0.457,0.577)[place]{};
\node (12) at (0.45,0)[place]{};
\node (13) at (0.65,0)[place]{};
\node (14) at (0.5768,0.1768)[place]{};
\node (15) at (0.5768,-0.1768)[place]{};
\node (16) at (0.8268,0.1768)[place]{};
\node (17) at (0.8268,-0.1768)[place]{};
\node (18) at (0.577,0.457)[place]{};
\node (19) at (0.7537,0.3538)[place]{};
\node (20) at (0.577,-0.457)[place]{};
\node (21) at (0.7537,-0.3538)[place]{};
\node (22) at (0,-0.45)[place]{};
\node (23) at (0,-0.65)[place]{};
\node (24) at (-0.177,-0.577)[place]{};
\node (25) at (0.177,-0.577)[place]{};
\node (26) at (-0.1765,-0.8265)[place]{};
\node (27) at (0.1765,-0.8265)[place]{};
\node (28) at (-0.3538,-0.7537)[place]{};
\node (29) at (-0.457,-0.577)[place]{};
\node (30) at (0.3538,-0.7537)[place]{};
\node (31) at (0.457,-0.577)[place]{};
\node (32) at (-0.45,0)[place]{};
\node (33) at (-0.65,0)[place]{};
\node (34) at (-0.5768,0.1768)[place]{};
\node (35) at (-0.5768,-0.1768)[place]{};
\node (36) at (-0.8268,0.1768)[place]{};
\node (37) at (-0.8268,-0.1768)[place]{};
\node (38) at (-0.577,0.457)[place]{};
\node (39) at (-0.7537,0.3538)[place]{};
\node (40) at (-0.577,-0.457)[place]{};
\node (41) at (-0.7537,-0.3538)[place]{};
\draw [] (1) to (2);
\draw [] (2) to (3);
\draw [] (2) to (4);
\draw [] (2) to (5);
\draw [] (3) to (6);
\draw [] (3) to (7);
\draw [] (4) to (8);
\draw [] (4) to (9);
\draw [] (5) to (10);
\draw [] (5) to (11);
\draw [] (1) to (12);
\draw [] (12) to (13);
\draw [] (12) to (14);
\draw [] (12) to (15);
\draw [] (13) to (16);
\draw [] (13) to (17);
\draw [] (14) to (18);
\draw [] (14) to (19);
\draw [] (15) to (20);
\draw [] (15) to (21);
\draw [] (1) to (22);
\draw [] (22) to (23);
\draw [] (22) to (24);
\draw [] (22) to (25);
\draw [] (23) to (26);
\draw [] (23) to (27);
\draw [] (24) to (28);
\draw [] (24) to (29);
\draw [] (25) to (30);
\draw [] (25) to (31);
\draw [] (1) to (32);
\draw [] (32) to (33);
\draw [] (32) to (34);
\draw [] (32) to (35);
\draw [] (33) to (36);
\draw [] (33) to (37);
\draw [] (34) to (38);
\draw [] (34) to (39);
\draw [] (35) to (40);
\draw [] (35) to (41);
\end{tikzpicture}
\hspace{15mm}
\begin{tikzpicture}
\node (1) at (0,0)[place]{};
\node (2) at (0,0.25)[place]{};
\node (3) at (-0.177,0.427)[place]{};
\node (4) at (0.177,0.427)[place]{};
\node (5) at (-0.3538,0.6037)[place]{};
\node (6) at (0.3538,0.6037)[place]{};
\node (7) at (0.25,0)[place]{};
\node (8) at (0.427,0.177)[place]{};
\node (9) at (0.427,-0.177)[place]{};
\node (10) at (0.6037,0.3538)[place]{};
\node (11) at (0.6037,-0.3538)[place]{};
\node (12) at (0,-0.25)[place]{};
\node (13) at (-0.177,-0.427)[place]{};
\node (14) at (0.177,-0.427)[place]{};
\node (15) at (-0.3538,-0.6037)[place]{};
\node (16) at (0.3538,-0.6037)[place]{};
\node (17) at (-0.25,0)[place]{};
\node (18) at (-0.427,0.177)[place]{};
\node (19) at (-0.427,-0.177)[place]{};
\node (20) at (-0.6037,0.3538)[place]{};
\node (21) at (-0.6037,-0.3538)[place]{};
\draw [] (1) to (2);
\draw [] (2) to (3);
\draw [] (2) to (4);
\draw [] (3) to (5);
\draw [] (4) to (6);
\draw [] (1) to (7);
\draw [] (7) to (8);
\draw [] (7) to (9);
\draw [] (8) to (10);
\draw [] (9) to (11);
\draw [] (1) to (12);
\draw [] (12) to (13);
\draw [] (12) to (14);
\draw [] (14) to (16);
\draw [] (13) to (15);
\draw [] (1) to (17);
\draw [] (17) to (18);
\draw [] (17) to (19);
\draw [] (18) to (20);
\draw [] (19) to (21);
\end{tikzpicture}
\caption{Graphs of balanced trees: $\A_4(\A_3(\Z_3))$ and $\A_4(\T^P_5)$.}\label{bush}
\end{figure}

Note that the pattern of $\A_m(\N)$ can be used to construct some balanced tree patterns recursively. 
We say that  
$\A$ is a \emph{recursive star pattern} 
 if $\A=\A_m(\N)$ and $\N$ is a recursive star pattern (with root at vertex 1),
or $\A=\Z_{s}$ is a star pattern with $s\geq 3$. The graph of $\A_4(\A_3(\Z_3))$ in Figure~\ref{bush}
is a recursive star pattern, but $\A_4(\T^P_5)$ is not a recursive star pattern
where 
$$\T^P_5=P\T_5P^T=
\begin{bmatrix}
0&*&0&*&0\\
*&0&*&0&0\\
0&*&*&0&0\\
*&0&0&0&*\\
0&0&0&*&*
\end{bmatrix}, \qquad {\mbox{\rm{with \quad}}}
P=\begin{bmatrix}
0&0&1&0&0\\
0&1&0&0&0\\
1&0&0&0&0\\
0&0&0&1&0\\
0&0&0&0&1
\end{bmatrix}.
$$

The next result is a corollary of Theorem~\ref{PN}.

\begin{theorem}\label{recursive}
If $\A$ is a recursive star pattern, 
then $\A$ is potentially nilpotent.
\end{theorem}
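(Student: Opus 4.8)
The plan is to prove this by induction on the recursive structure used to build $\A$. The definition of a recursive star pattern is itself inductive: either $\A=\Z_s$ for some $s\geq 3$ (the base case), or $\A=\A_m(\N)$ where $\N$ is a recursive star pattern rooted at vertex $1$ (the recursive case). Since the construction $\A_m(\N)$ produces an $n\times n$ pattern with $n=ms+1>s$ from an $s\times s$ pattern $\N$, each application of the construction strictly increases the order; hence any recursive star pattern is obtained from a base pattern $\Z_s$ by finitely many applications of the construction, and induction on the number of such applications (equivalently, on the order of $\A$) is well-founded.

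For the base case I would simply invoke the known result that $\Z_s$ is potentially nilpotent for all $s\geq 3$, namely~\cite[Theorem 3.1]{MTvdD}, which is already cited in the surrounding discussion. This handles every recursive star pattern at the bottom of its recursion.

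For the inductive step, suppose $\A=\A_m(\N)$ with $\N$ a recursive star pattern of smaller order. By the induction hypothesis, $\N$ is potentially nilpotent. The construction $\A_m(\N)$ requires $m\geq 2$, so Theorem~\ref{PN} applies directly and yields that $\A_m(\N)=\A$ is potentially nilpotent. This closes the induction and establishes the theorem for all recursive star patterns.

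I expect no real obstacle here: the result is essentially a packaging of Theorem~\ref{PN} together with the base result on star patterns, and the only point requiring a word of care is confirming that the recursion terminates so that the structural induction is legitimate. The strict increase in order under $\A\mapsto\A_m(\N)$ makes this immediate, so the argument is short and the heavy lifting has already been done in Theorem~\ref{PN}.
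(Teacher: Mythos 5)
Your proof is correct and follows exactly the route the paper intends: the paper gives no separate argument, simply declaring the theorem ``a corollary of Theorem~\ref{PN},'' which amounts to the same structural induction you spell out --- base case $\Z_s$ potentially nilpotent by~\cite[Theorem 3.1]{MTvdD}, inductive step by Theorem~\ref{PN}. Your added remark on well-foundedness of the recursion is a harmless (and reasonable) elaboration of what the paper leaves implicit.
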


Theorem~\ref{recursive} gives a whole class of potentially nilpotent balanced trees but there are also other
potentially nilpotent balanced trees. For example the balanced tree pattern $\A_4(\T^P_5)$ (represented by the second
graph in Figure~\ref{bush}) is a potentially nilpotent balanced tree by Theorem~\ref{PN} since $\T^P_5$ is potentially nilpotent 
($\T^P_5$ is permutationally equivalent to $\T_5$).

\begin{theorem}\label{allowindex}
If the star pattern $\Z=\Z_s$ allows index $k$, 
then $\A_{m}(\Z)$ allows index $2k+1$, and
each row and column of $\Z^{k-1}$ has a nonzero entry.
\end{theorem}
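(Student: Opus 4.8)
The plan is to show that for any nilpotent $Z\in Q(\Z_s)$ of index $k$, the power $Z^{k-1}$ is a rank-one matrix both of whose factors have only nonzero entries; this immediately makes every row and every column of $Z^{k-1}$ nonzero, and in particular its first row and first column, so that Theorem~\ref{index} (applied with $N=Z$) delivers that $A_m(Z)$ is nilpotent of index $2k+1$, and hence that $\A_m(\Z)$ allows index $2k+1$. Following the block form of the star pattern, I would write a nilpotent realization as $Z=\begin{bmatrix} 0 & \bvec{b}^T\\ \bvec{c} & D\end{bmatrix}$, where $\bvec{b},\bvec{c}\in\reals^{s-1}$ have all entries nonzero and $D=\mathrm{diag}(d_2,\dots,d_s)$ has all diagonal entries nonzero.

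First I would pin down $\ker Z$. Solving $Z\bvec{x}=\mathbf{0}$ with $\bvec{x}=(x_1,\bvec{y})$, the lower block gives $\bvec{y}=-x_1 D^{-1}\bvec{c}$ since $D$ is invertible; if $x_1=0$ then $\bvec{y}=\mathbf{0}$, so every nonzero kernel vector has $x_1\neq 0$. Because $Z$ is nilpotent, and hence singular, such a vector exists, so $\ker Z$ is one-dimensional and spanned by $\bvec{u}=(1,-D^{-1}\bvec{c})^T$, every entry of which is nonzero. The identical computation applied to $Z^T=\begin{bmatrix}0 & \bvec{c}^T\\ \bvec{b} & D\end{bmatrix}$ shows that $\ker Z^T$ is spanned by $\bvec{v}=(1,-D^{-1}\bvec{b})^T$, again with all entries nonzero. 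I expect this to be the crux of the argument: the invertibility of the diagonal block $D$ is what forces $x_1\neq 0$, and this is exactly what prevents $Z^{k-1}$ from acquiring a zero row or column (and, as a byproduct, it shows $Z$ has a one-dimensional kernel, hence a single Jordan block, so in fact $k=s$).

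With the kernels in hand the remainder is routine. Since $Z\cdot Z^{k-1}=Z^k=0$, the image of $Z^{k-1}$ lies in $\ker Z$, whence $\mathrm{rank}(Z^{k-1})\leq 1$; as $Z^{k-1}\neq\mathbf{0}$ by the definition of index, the rank is exactly $1$ and the image is spanned by $\bvec{u}$. Thus $Z^{k-1}=\bvec{u}\,\bvec{q}^T$ for some vector $\bvec{q}$, and the relation $\bvec{u}\,\bvec{q}^T Z=Z^k=0$ together with $\bvec{u}\neq\mathbf{0}$ forces $\bvec{q}^T Z=\mathbf{0}$, so that $\bvec{q}\in\ker Z^T$ and $Z^{k-1}=c\,\bvec{u}\bvec{v}^T$ for some $c\neq 0$. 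Every entry of $\bvec{u}\bvec{v}^T$ is nonzero, so every row and every column of $Z^{k-1}$ contains a nonzero entry, as claimed. In particular the first row and first column are nonzero, so Theorem~\ref{index} applies with $N=Z$ and yields that $A_m(Z)$ is nilpotent of index $2k+1$, proving that $\A_m(\Z)$ allows index $2k+1$.
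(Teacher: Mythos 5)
Your proof is correct, and it reaches the paper's conclusion by a genuinely different route. The paper's own argument is entrywise and combinatorial: writing $N^k = NN^{k-1}$ and using that the only nonzero entries of a leaf row $i \neq 1$ of $N$ are the spoke $N_{i,1}$ and the loop $N_{i,i}$, it shows first that row $1$ of $N^{k-1}$ cannot vanish (else a nonzero entry $N^{k-1}_{i,j}$ with $i \neq 1$ would give $N^{k}_{i,j} = N_{i,i}N^{k-1}_{i,j} \neq 0$), then that no leaf row can vanish (else $0 = N^{k}_{i,j} = N_{i,1}N^{k-1}_{1,j}$ for all $j$ would force row $1$ to vanish), and finally handles columns by repeating the argument on $N^k = N^{k-1}N$, using the symmetry of the pattern. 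You instead compute $\ker Z$ and $\ker Z^{T}$ exactly, showing each is one-dimensional and spanned by an entrywise-nonzero vector ($u$ and $v$ respectively, with the invertibility of the diagonal block $D$ doing the work), and then use $ZZ^{k-1} = Z^{k-1}Z = 0$ to force $Z^{k-1} = c\,uv^{T}$ with $c \neq 0$. Your route yields two strictly stronger conclusions: every \emph{entry} of $Z^{k-1}$ is nonzero, not merely every row and column; and, since $Z$ has nullity one, it consists of a single Jordan block, so every nilpotent realization of $\Z_s$ automatically has full index $k = s$. That last fact the paper only obtains later and indirectly, in the closing corollary of Section~\ref{full}, where ``$\Z_s$ allows full index'' is deduced from the Nilpotent-Jacobian argument of \cite{MTvdD} together with Theorem~\ref{big}; your kernel computation gives it for free. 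What the paper's approach buys in exchange is economy of means: it uses nothing beyond matrix multiplication and the sign pattern itself, with no appeal to rank or Jordan structure. Both are complete proofs of the statement.
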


\begin{proof} Let $\N$ be a star pattern of order $s$ and let $N\in Q(\N)$ have
index $k$. 
By Theorem~\ref{index} it is enough to show that each row and column of $N^{k-1}$ contains a nonzero entry. 
Suppose that row $1$ of $N^{k-1}$ is $\bvec{0}^T$. Since $N$ has index $k$, $N^{k-1}_{ij}\neq 0$ 
for some row $i\neq 1$ and some column $j$. Then $N^k_{ij}\neq 0$ since $N^k_{ij}=(NN^{k-1})_{ij}=N_{ii}N^{k-1}_{ij}\neq 0$.
This contradicts the fact that $N^k=0$. Thus row $1$ of $N^{k-1}$ is not $\bvec{0}^T$ 
 and contains a nonzero entry. 

Suppose that for some $i$, $2\leq i \leq s$, row $i$ of $N^{k-1}$ is $\bvec{0}^T$. Then since $N^k=0$ it follows that
for each column $j$, 
$0=N^k_{ij}=(NN^{k-1})_{ij}=N_{i1}N^{k-1}_{1j}+N_{ii}N^{k-1}_{ij}=N^{k-1}_{1j}$.
That is, row $1$ of $N^{k-1}$ is $\bvec{0}^T$. But this contradicts what was observed
above. Thus every row 
of $N^{k-1}$ contains
a nonzero entry.

Since $\N$ is symmetric and $N^k=N^{k-1}N$, a similar argument show that every column of $N^{k-1}$ contains a nonzero entry.
\end{proof}

The following corollary follows from Theorem~\ref{index} and Theorem~\ref{allowindex}.

\begin{corollary}\label{starfull}
If the star pattern $\Z_s$ allows full index $s$, then $\A_{2}(\Z_s)$ allows full index.
\end{corollary}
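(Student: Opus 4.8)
The plan is to combine the two preceding results in the obvious chain. Corollary~\ref{starfull} concerns the pattern $\A_2(\Z_s)$, which is $(2s+1)\times(2s+1)$, and asserts that it allows full index, namely index $2s+1$. So I would first fix $s$ and assume that $\Z_s$ allows full index $s$; that is, there is a nilpotent matrix $N \in Q(\Z_s)$ with index exactly $s$. The goal is then to produce a nilpotent matrix in $Q(\A_2(\Z_s))$ whose index equals $2s+1$.

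The key observation is that Theorem~\ref{allowindex} does almost all the work: applied with $m=2$ and $k=s$, it states that if $\Z_s$ allows index $s$, then $\A_2(\Z_s)$ allows index $2s+1$. Since $n = ms+1 = 2s+1$ here, the index $2s+1$ is precisely the full index of the $(2s+1)\times(2s+1)$ pattern $\A_2(\Z_s)$. So the corollary is really just the case $m=2$, $k=s$ of Theorem~\ref{allowindex}, observed to coincide with full index. I would write the proof as: take $N\in Q(\Z_s)$ nilpotent of index $s$; by Theorem~\ref{allowindex} (with $m=2$, $k=s$) the matrix $A_2(N)$ is nilpotent of index $2s+1$; and since $A_2(\Z_s)$ is a pattern of order $2s+1$, this index $2s+1$ is the full index.

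The only point requiring a moment's care is the bookkeeping that $2k+1 = n$ when $k=s$ and $m=2$: the order of $\A_m(\Z_s)$ is $n = ms+1$, so full index means index $ms+1$, and the conclusion of Theorem~\ref{allowindex} delivers index $2k+1 = 2s+1 = 2\cdot s + 1 = ms+1$ exactly when $m=2$. I would state this arithmetic explicitly so the reader sees why the corollary is restricted to $m=2$: for $m\geq 3$ the index $2k+1$ produced by Theorem~\ref{allowindex} falls short of the full index $ms+1$, so the result genuinely uses $m=2$.

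I do not expect any real obstacle here, since the statement is a direct specialization of Theorem~\ref{allowindex} combined with the trivial index arithmetic. If anything, the mild subtlety is ensuring that the hypothesis of Theorem~\ref{allowindex}---that $\Z_s$ allows index $k$ with each row and column of $\Z^{k-1}$ having a nonzero entry---is automatically met; but that row/column condition is part of the \emph{conclusion} of Theorem~\ref{allowindex}, not an extra hypothesis to verify, so the single hypothesis ``$\Z_s$ allows full index $s$'' suffices. Thus the whole proof is one sentence invoking Theorem~\ref{allowindex} followed by the remark that $2s+1$ is the order of $\A_2(\Z_s)$.
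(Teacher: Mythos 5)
Your proposal is correct and matches the paper's own (one-line) justification: the corollary is exactly the specialization $m=2$, $k=s$ of Theorem~\ref{allowindex}, combined with the observation that $2s+1$ equals the order $ms+1$ of $\A_2(\Z_s)$, which is precisely why the statement is restricted to $m=2$. Your remark that the row/column nonzero condition is a conclusion of Theorem~\ref{allowindex} rather than an extra hypothesis is also accurate, so nothing further is needed.
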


We will explore the significance of allowing full index in Section~\ref{full}.

\section{Spectrally arbitrary patterns, full index, and the Nilpotent-Jacobian method.}\label{full}

In this section we observe that it is necessary that a nilpotent matrix have
full index if it is to be successfully employed in the Nilpotent-Jacobian method
to demonstrate that a pattern is spectrally arbitrary. We will first describe
the Nilpotent-Jacobian method and give some examples. 

Let $\A$ be an $n \times n$ sign pattern, and suppose that
there exists a nilpotent matrix $N \in Q(\A)$ with at least
$n$ non-zero entries, say $a_{i_1,j_1},\ldots,a_{i_n,j_n}$.  Let
$X=X(x_1,\ldots,x_n)$ denote the matrix obtained from $N$ with the $(i_k,j_k)$-th position
replaced with the variable $x_k$ for $k=1,\ldots,n$, and let
\[
{p}_{X}(x) = x^n + f_1x^{n-1} + f_2x^{n-2}+
\cdots + f_{n-1}x + f_n\]
be the characteristic polynomial of $X(x_1,\ldots,x_n)$, where each
$f_i = f_i(x_1,\ldots,x_n)$ is a polynomial in $x_1,\ldots,x_n$.
Let $J$ be the order $n \times n$ Jacobian matrix with
\[J = \begin{bmatrix}
\frac{\partial f_1}{\partial x_1} & \frac{\partial f_1}{\partial x_2} & \cdots & \frac{\partial f_1}{\partial x_n} \\
\frac{\partial f_2}{\partial x_1} & \ddots&  &\frac{\partial f_2}{\partial x_n} \\
\vdots & & & \vdots \\
\frac{\partial f_n}{\partial x_1} & \frac{\partial f_1}{\partial x_2} & \cdots & \frac{\partial f_n}{\partial x_n} \\
\end{bmatrix}.
\]
Setting  $\mathbf{x} = (x_1,\ldots,x_n)$ and $\mathbf{a}=(a_{i_1,j_1},\ldots,a_{i_n,j_n})$,
we let 
\begin{equation}\label{Jacob}
J'=J|_{\mathbf{x}=\mathbf{a}}
\end{equation}
 denote the Jacobian matrix evaluated at 
${\mathbf{x}=\mathbf{a}}$.
The \emph{\textbf{Nilpotent-Jacobian method}} is to seek a nilpotent realization of $\A$ for which 
$J'$ is nonsingular: the following theorem, first developed in~\cite{Drew}, shows
that $\A$ is spectrally arbitrary if such a realization is found.

Recall that $\cB$ is a \emph{superpattern} of a pattern $\A$ if $\A_{i,j}\neq 0$ implies $\cB_{i,j}\neq 0$.
Note that $\A$ is a superpattern of itself. 

\begin{theorem}[\cite{Drew}]\label{NJ}
If $J'$ is nonsingular, 
then every superpattern of $\mathcal{A}$ is spectrally arbitrary.
\end{theorem}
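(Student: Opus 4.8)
The plan is to recognize the coefficient assignment as a smooth map between Euclidean spaces and to combine the Inverse Function Theorem with a scaling trick. I would write $F\colon \reals^n \to \reals^n$ for the map $F(\mathbf{x}) = (f_1(\mathbf{x}),\ldots,f_n(\mathbf{x}))$ sending the tuple of variable entries to the nonleading coefficients of $p_X$. Since $N$ is nilpotent, $p_N(x)=x^n$, so $F(\mathbf{a})=\mathbf{0}$; and by construction the derivative $DF(\mathbf{a})$ is exactly $J'$. The hypothesis that $J'$ is nonsingular then lets me invoke the Inverse Function Theorem to conclude that $F$ restricts to a diffeomorphism from an open neighborhood $U$ of $\mathbf{a}$ onto an open neighborhood $V$ of $\mathbf{0}$. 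In particular every coefficient tuple sufficiently close to the zero tuple (the characteristic polynomial $x^n$) is realized by some $X(\mathbf{x})$ with $\mathbf{x}\in U$, and since the entries of $X(\mathbf{x})$ agree with those of $N$ off the $n$ variable positions and are close to the nonzero values $a_{i_k,j_k}$ on them, each such $X(\mathbf{x})$ lies in $Q(\A)$.

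Next I would upgrade this local statement to a global one by scaling. The key observation is that if a matrix $B$ has characteristic polynomial $x^n + c_1 x^{n-1}+\cdots+c_n$, then $\rho B$ has characteristic polynomial $x^n + \rho c_1 x^{n-1} + \rho^2 c_2 x^{n-2}+\cdots+\rho^n c_n$, because multiplying a matrix by $\rho$ multiplies each eigenvalue by $\rho$ and hence scales the $k$-th elementary symmetric function of the eigenvalues by $\rho^k$. Now let $(c_1,\ldots,c_n)$ be the coefficients of an arbitrary monic real polynomial of degree $n$ (equivalently, an arbitrary self-conjugate multiset of $n$ complex numbers). For $\rho>0$ large the scaled tuple $(c_1/\rho, c_2/\rho^2, \ldots, c_n/\rho^n)$ lies in $V$, so there is $\mathbf{x}\in U$ with $F(\mathbf{x}) = (c_1/\rho,\ldots,c_n/\rho^n)$; then $\rho\,X(\mathbf{x})$ has characteristic polynomial $x^n + c_1 x^{n-1}+\cdots+c_n$ and, being a nonzero scalar multiple of an element of $Q(\A)$, still lies in $Q(\A)$. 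This shows $\A$ itself is spectrally arbitrary.

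Finally, to cover an arbitrary superpattern $\cB$ of $\A$, I would introduce additional parameters $\mathbf{s}$ for the entries of $\cB$ that are not forced nonzero by $\A$, forming a matrix $Y(\mathbf{x},\mathbf{s})$ that equals $X(\mathbf{x})$ when $\mathbf{s}=\mathbf{0}$. Regarding $\mathbf{s}$ as a parameter, the partial derivative of the coefficient map with respect to $\mathbf{x}$ at $(\mathbf{a},\mathbf{0})$ is again $J'$; since nonsingularity is an open condition and the coefficients depend continuously on $(\mathbf{x},\mathbf{s})$, for every sufficiently small fixed $\mathbf{s}$ the map $\mathbf{x}\mapsto(\text{coefficients of }Y(\mathbf{x},\mathbf{s}))$ is still a local diffeomorphism near $\mathbf{a}$, and its image contains a neighborhood of its value at $\mathbf{a}$, which tends to $\mathbf{0}$ as $\mathbf{s}\to\mathbf{0}$. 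Choosing such an $\mathbf{s}$ with all coordinates nonzero places $Y(\mathbf{a},\mathbf{s})$ in $Q(\cB)$ and makes the image of the coefficient map contain a fixed neighborhood of $\mathbf{0}$; the same scaling argument then applies verbatim, scaling $\mathbf{s}$ along with $\mathbf{x}$ and keeping the matrix in $Q(\cB)$. The main obstacle I anticipate is precisely this last step: verifying carefully that a neighborhood of the zero coefficient tuple stays inside the image after perturbing the extra entries to nonzero values, which is where the continuity of the coefficients and the openness of the nonsingularity condition do the work; everything else reduces to the Inverse Function Theorem and the elementary scaling identity for characteristic polynomials.
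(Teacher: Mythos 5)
The paper itself contains no proof of this theorem: it is quoted from \cite{Drew}, so the comparison must be against the standard argument in that reference, and your proposal is essentially that argument. The core of your write-up is correct and complete: since $N$ is nilpotent, $F(\mathbf{a})=\mathbf{0}$; nonsingularity of $J'=DF(\mathbf{a})$ gives, via the Inverse (equivalently Implicit) Function Theorem, local surjectivity onto a neighborhood $V$ of $\mathbf{0}$; small perturbations of the variable entries and positive scalar multiples preserve membership in $Q(\A)$; and the identity that replacing $B$ by $\rho B$ multiplies the coefficient of $x^{n-k}$ in the characteristic polynomial by $\rho^{k}$ converts local surjectivity near $\mathbf{0}$ into all of $\reals^n$, i.e.\ into an arbitrary self-conjugate spectrum.

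The one place your justification is thinner than it needs to be is the superpattern step, and you flagged it yourself. Knowing that the image of $\mathbf{x}\mapsto F_{\mathbf{s}}(\mathbf{x})$ contains \emph{some} neighborhood of $F_{\mathbf{s}}(\mathbf{a})$, together with $F_{\mathbf{s}}(\mathbf{a})\to\mathbf{0}$, does not by itself yield a neighborhood of $\mathbf{0}$ whose size is bounded below as $\mathbf{s}\to\mathbf{0}$; openness of nonsingularity plus continuity of the coefficients is not enough, and one needs the quantitative form of the inverse function theorem (uniform $C^1$ control of $F_{\mathbf{s}}$ near $\mathbf{a}$, which does hold here because all maps are polynomial). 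Two cleaner ways to close this: (i) apply the inverse function theorem once to the joint map $(\mathbf{x},\mathbf{s})\mapsto(F_{\mathbf{s}}(\mathbf{x}),\mathbf{s})$ on $\reals^{n+m}$, whose Jacobian at $(\mathbf{a},\mathbf{0})$ is block triangular with blocks $J'$ and the identity, hence nonsingular; its image then contains a product neighborhood $W\times D$ of $(\mathbf{0},\mathbf{0})$, and any $\mathbf{s}\in D$ with all coordinates nonzero works with the \emph{fixed} neighborhood $W$; or (ii) apply the implicit function theorem to $F_{\mathbf{s}}(\mathbf{x})=\mathbf{0}$ to produce, for each small $\mathbf{s}$, a point $\mathbf{x}(\mathbf{s})$ near $\mathbf{a}$ such that $Y(\mathbf{x}(\mathbf{s}),\mathbf{s})$ is nilpotent, lies in $Q(\cB)$, and still has nonsingular Jacobian by continuity of $\det$; this shows $\cB$ itself satisfies the hypotheses of the case you already proved, so superpatterns reduce to patterns. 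With either repair your proof is a complete and faithful rendition of the cited result.
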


We apply the Nilpotent-Jacobian method to the balanced tree patterns $\A_2(\Z_3)$, $\A_2(\Z_4)$
and $\A_2(\Z_5)$ to demonstrate that these are spectrally aribitrary patterns.
The examples will refer
to the matrices $N\in Q(\A_2(\Z_s))$ of the form
\begin{equation}\label{greek}
N=\left[ \begin{array}{c|cccc|ccccc}
0		&1			&0			&\cdots	&0			&1		&0		&\cdots &\cdots	&0\\ \hline
x_1     &0			&1			&\cdots	&1			&		&		&		&&\\
0		&x_2		&x_{s+1}	&		&			&		&		&0	&&	\\
\vdots	&\vdots		&			&\ddots	&			&		&		&		&&\\
0		&x_{s}  	&			&		&x_{2s-1}   &		&		&		&&\\ \hline
y_1     &			&			&		&			&0		&1		&\cdots	&\cdots&1 \\
0		&			&			&		&			&x_{2s}	&x_{2s+1}&		&&\\
\vdots	&			&0		&		&			&y_2	&		 &y_{s}	&&\\
\vdots	&			&   		&		&			&\vdots	&		&&\ddots	&\\
0		&			&			&		&			&y_{s-1}&	&	&	&y_{2s-3}
\end{array}\right].
\end{equation}

\begin{example}\label{E1} 
Every superpattern of $\A_2(\Z_3)$ is spectrally arbitrary.
\end{example}
\begin{proof}
Take $N=A_2(Z)\in Q(\A_2(\Z_3))$ with 
$$Z=\left[\begin{array}{rrr}
0 & 1 & 1 \\
-{1}/{2} 
& 1 & 0\\
-{1}/{2} 
& 0 & -1
\end{array}\right]
$$ and $x_1=-y_1=-1$.
Since $Z$ is nilpotent, it follows from Theorem~\ref{index} and Corollary~\ref{allowindex} that $N$ is nilpotent.
Let $X$ be the matrix obtained from $N$ by setting 
$x_1,\ldots,x_7$ as variables. 
One can check that the Jacobian matrix $J'=J|_{(x_1,\ldots,x_7)=(-1,-{1}/{2},-{1}/{2},1, -1,-{1}/{2},1)}$
has a nonzero determinant.
Thus by Theorem~\ref{NJ},
every superpattern of $\A_2(\Z_3)$ is spectrally arbitary.
\end{proof}

\begin{example}\label{E2}
Every superpattern of $\A_2(\Z_4)$ is spectrally arbitrary.
\end{example}

\begin{proof}
  Take $N=A_2(Z)\in Q(\A_2(\Z_4))$ with 
  $$Z=\left[ \begin{array}{rrrr}
0 & 1 & 1&1 \\
{1}/{4} & 1 & 0&0\\
-{16}/{5} &0&2&0\\ 
-{81}/{20}&0&0&-3\\
\end{array}\right]
$$ and $x_1=-y_1=-1$.
Since $Z$ is nilpotent, it follows from Theorem~\ref{index} and Corollary~\ref{allowindex} that $N$ is nilpotent.
Let $X$ be obtained from $N$ by taking 
$x_1,\ldots,x_{9}$ as variables. 
One can check that the Jacobian matrix $J'=J|_{(x_1,\ldots,x_{9})=(-1,{1}/{4},-{16}/{5},-{81}/{20} ,1,2,-3,{1}/{4},1)}$
has nonzero determinant. Thus $\A_2(\Z_4)$ is a spectrally arbitrary pattern.
\end{proof}

\begin{example}\label{E3}
Every superpattern of $\A_2(\Z_5)$ is spectrally arbitrary.
\end{example}
 
 \begin{proof}  
  Take $N=A_2(Z)\in Q(\A_2(\Z_5))$ with 
  $$Z=\left[ \begin{array}{rrrrr}
0 & 1 & 1&1&1 \\
-{1}/{14} & 1 & 0&0&0\\
4 &0&2&0&0\\ 
-{27}/{2}&0&0&3&0\\
-{108}/{7}&0&0& 0 & -6
\end{array}\right]
$$ and $x_1=-y_1=-1$.
Since $Z$ is nilpotent, it follows from Theorem~\ref{index} and Corollary~\ref{allowindex} that $N$ is nilpotent.
Let $X$ be obtained from $N$ by taking 
$x_1,\ldots,x_{11}$ as variables. 
One can check that the Jacobian matrix 
$J'=J|_{(x_1,\ldots,x_{11})=(-1,-{1}/{14},4,-{27}/{2},-{108}/{7},1,2,3,-6,-{1}/{14},1)}$
has nonzero determinant. Thus $\A_2(\Z_5)$ is a spectrally arbitrary pattern.
\end{proof}

In Examples~\ref{E1}-\ref{E3}, the matrix $N$ that we constructed has full index. We then used
this matrix in the Nilpotent-Jacobian method. If one examines other cases where the 
Nilpotent-Jacobian method is used (see for example the pattern $\cW_n(k)$ in~\cite{Britz} and
$\D_{n,r}$ 
in~\cite{CV05}), one will also find the 
initial nilpotent matrix has full index. As we will see in Theorem~\ref{big}, this is not a coincidence, but
a necessary condition about nilpotent matrices used in the Nilpotent-Jacobian method.

We first formalize a couple of observations of
Pereira~\cite[proof of Theorem 2.2]{P} about the entries
of the $\adj{xI-N}$. For $1\leq i,j, \leq n$, we let
$$p_{i,j}(x)=[\adj{xI-N}]_{j,i}.$$

\begin{lemma}\label{P1}
The $k^{th}$ column of the Jacobian matrix $J'$ described in line~$(\ref{Jacob})$
consists of the coefficients of the polynomial $-p_{i_k,j_k}(x)$ for
all $1\leq k \leq n$.
\end{lemma}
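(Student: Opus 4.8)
The plan is to differentiate the characteristic polynomial $\det(xI-X)$ with respect to each deformation variable $x_k$ by means of Jacobi's formula, and then read off the coefficients in powers of $x$. First I would record the effect of $x_k$ on the matrix: since $x_k$ occupies exactly the $(i_k,j_k)$ entry of $X$, we have $\partial(xI-X)/\partial x_k=-E_{i_k,j_k}$, where $E_{i,j}$ denotes the matrix with a single $1$ in position $(i,j)$ and zeros elsewhere (this holds whether or not $(i_k,j_k)$ is a diagonal position). Jacobi's formula for the derivative of a determinant then gives
$$\frac{\partial}{\partial x_k}\det(xI-X)=\mathrm{tr}\!\left(\adj{xI-X}\cdot\frac{\partial(xI-X)}{\partial x_k}\right)=-\,\mathrm{tr}\!\left(\adj{xI-X}\,E_{i_k,j_k}\right).$$

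Next I would invoke the elementary identity $\mathrm{tr}(M\,E_{i,j})=M_{j,i}$, which collapses the right-hand side to $-[\adj{xI-X}]_{j_k,i_k}$. Evaluating at $\mathbf{x}=\mathbf{a}$, that is, specializing $X$ back to $N$, and using the definition $p_{i,j}(x)=[\adj{xI-N}]_{j,i}$, I obtain
$$\left.\frac{\partial}{\partial x_k}\det(xI-X)\right|_{\mathbf{x}=\mathbf{a}}=-[\adj{xI-N}]_{j_k,i_k}=-p_{i_k,j_k}(x).$$

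Finally I would compare coefficients in $x$. Differentiating the expansion $\det(xI-X)=x^n+f_1x^{n-1}+\cdots+f_n$ term by term with respect to $x_k$ (the leading $x^n$ contributes nothing) yields
$$\frac{\partial}{\partial x_k}\det(xI-X)=\frac{\partial f_1}{\partial x_k}x^{n-1}+\frac{\partial f_2}{\partial x_k}x^{n-2}+\cdots+\frac{\partial f_n}{\partial x_k}.$$
Setting $\mathbf{x}=\mathbf{a}$ and equating this with the polynomial $-p_{i_k,j_k}(x)$ from the previous step identifies the coefficients of $x^{n-1},\ldots,x^0$ in $-p_{i_k,j_k}(x)$ with the scalars $\partial f_1/\partial x_k|_{\mathbf{a}},\ldots,\partial f_n/\partial x_k|_{\mathbf{a}}$, which are exactly the entries of the $k^{th}$ column of $J'$. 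Since $\adj{xI-N}$ has polynomial entries of degree at most $n-1$, the polynomial $-p_{i_k,j_k}(x)$ has precisely $n$ coefficients, matching the $n$ rows of $J'$, so the identification is complete.

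The computation is essentially a direct application of Jacobi's formula, so I do not anticipate a serious obstacle; the one point demanding care is the bookkeeping of the two index transpositions. The definition $p_{i,j}(x)=[\adj{xI-N}]_{j,i}$ carries a transpose, and so does the trace identity $\mathrm{tr}(M\,E_{i,j})=M_{j,i}$; I would check that these two transpositions are consistent and together deliver the entry $[\adj{xI-N}]_{j_k,i_k}$ (and hence $-p_{i_k,j_k}$) rather than its transpose, which is what makes the column of $J'$ correspond to the correctly-indexed $p_{i_k,j_k}$.
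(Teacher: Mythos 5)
Your proposal is correct and takes essentially the same approach as the paper: both compute $\frac{\partial}{\partial x_k}\det(xI-X)$ as $-[\adj{xI-X}]_{j_k,i_k}$ and then read off the coefficients of $-p_{i_k,j_k}(x)$ against the column entries of $J'$. The only cosmetic difference is that you invoke Jacobi's formula together with the trace identity $\mathrm{tr}(M E_{i,j})=M_{j,i}$, whereas the paper obtains the same identity directly from the linearity of the determinant in $x_k$ and cofactor expansion; your explicit handling of the diagonal-entry case and of the two index transpositions is consistent with the paper's conventions.
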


\begin{proof} 

For $1\leq k \leq n$, let 
\[p_{i_k,j_k}(x) = c_{k,0} + c_{k,1}x + \cdots + c_{k,n-1}x^{n-1}.\]

Pick any $k$, $1\leq k\leq n$.
Thinking of $\det(xI-X)$ as a linear function of $x_k$, observe that $\frac{\partial{p}_{X}(x)}{\partial x_k}$ provides the coefficient
of $x_k$ in the expansion of $\det(xI-X)$. Note also that 
$$\frac{\partial{p}_{X}(x)}{\partial x_k}=\frac{\partial{f_1}}{\partial{x_k}}x^{n-1}
+\frac{\partial{f_2}}{\partial{x_k}}x^{n-2}+\cdots+\frac{\partial{f_n}}{\partial{x_k}}.$$
But this is the negative of the cofactor of position of $(i_k,j_k)$ in $(xI-X)$
since the entry in position $(i_k,j_k)$ is $-x_k$.
Thus 
$$\left.{\left.{\frac{\partial{p}_{X}(x)}{\partial x_k}}\right\vert_{\mathbf{x}=\mathbf{a}}
=-[\adj{xI-X}]_{j_k,i_k}}\right\vert_{\mathbf{x}=\mathbf{a}}
=-[\adj{xI-N}]_{j_k,i_k}
=-p_{i_k,j_k}(x).$$
Therefore
$$J'=J|_{\mathbf{x}=\mathbf{a}}=-\begin{bmatrix}c_{1,n-1} & c_{2,n-1} & \cdots & c_{n,n-1} \\
c_{1,n-2} & c_{2,n-2} & \cdots & c_{n,n-2} \\
\vdots & \vdots &\ddots & \vdots \\
c_{1,1} & c_{2,1}  & & c_{n,1} \\
c_{1,0} & c_{2,0} & \cdots & c_{n,0}
\end{bmatrix}.$$
\end{proof}

\begin{example} \emph{
Consider the nilpotent matrix $$N=\left[\begin{array}{rrr}
0&1&1\\
-1/2&1&0\\
-1/2&0&-1
\end{array}\right]
\qquad {\mbox{\rm{with \quad}}} 
X=\left[\begin{array}{rrr}
0&1&1\\
x_1&x_2&0\\
x_3&0&-1
\end{array}\right].
$$
Then $p_X(x)=x^3+(1-x_2)x^2+(-x_1-x_2-x_3)x+x_2x_3-x_1$ and
$$J=
\left[\begin{array}{rrr}
0&-1&0\\
-1&-1&-1\\
-1&x_3&x_2
\end{array}\right]
\qquad {\mbox{\rm{with \quad}}} 
J'=
\left[\begin{array}{rrr}
0&-1&0\\
-1&-1&-1\\
-1&-\frac{1}{2}&1
\end{array}\right]
$$
On the other hand
$$\adj{xI-N}=
\left[\begin{array}{ccc}
x^2-1&x+1&x-1\\
-\frac{1}{2}x-\frac{1}{2}&x^2+x+\frac{1}{2}&-\frac{1}{2}\\
-\frac{1}{2}x+\frac{1}{2}&-\frac{1}{2}&x^2-x+\frac{1}{2}
\end{array}\right]$$
Hence $p_{i_1,j_1}(x)=0x^2+x+1$, 
$p_{i_2,j_2}(x)=x^2+x+\frac{1}{2}$ 
and 
$p_{i_3,j_3}(x)=0x^2+x-1$. The coefficients of these polynomials
appear as the columns of $(-J')$.
}
\end{example}

The following lemma, formalizing another observation made in~\cite{P}, 
is a corollary of Lemma~\ref{P1}.

\begin{lemma}\label{P2}
The Jacobian matrix $J'$ described in line~$(\ref{Jacob})$
is nonsingular if and only if the set
of polynomials $\{p_{i_1,j_1}(x),p_{i_2,j_2}(x),\ldots,p_{i_n,j_n}(x)\}$ is linearly independent.
\end{lemma}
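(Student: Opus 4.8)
The plan is to read this off Lemma~\ref{P1} almost verbatim, since that lemma has already done the substantive work of identifying the columns of $J'$ with polynomial data. By Lemma~\ref{P1}, the $k^{th}$ column of $J'$ is the coefficient vector of $-p_{i_k,j_k}(x)$, listed from the coefficient of $x^{n-1}$ at the top down to the constant term at the bottom. Equivalently, $-J'$ is the matrix whose $k^{th}$ column is the coordinate vector of $p_{i_k,j_k}(x)$ with respect to the ordered basis $\{x^{n-1}, x^{n-2}, \ldots, x, 1\}$ of the space of real polynomials of degree at most $n-1$. First I would note that each $p_{i_k,j_k}(x)$ is an entry of $\adj{xI-N}$ and so has degree at most $n-1$, which guarantees that these coordinate vectors really do lie in $\reals^n$ and that the association is well defined.

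Next I would invoke the standard fact that the coordinate map sending a polynomial of degree at most $n-1$ to its coefficient vector in $\reals^n$ is a linear isomorphism. Since a finite set of vectors is linearly independent exactly when its image under a linear isomorphism is, the set $\{p_{i_1,j_1}(x), \ldots, p_{i_n,j_n}(x)\}$ is linearly independent if and only if the corresponding set of columns of $-J'$ is linearly independent. Finally I would close the loop using the elementary equivalences that $J'$ and $-J'$ have the same rank, and that a square matrix is nonsingular precisely when its columns are linearly independent. Chaining these gives: $J'$ is nonsingular $\iff$ the columns of $-J'$ are independent $\iff$ the polynomials $p_{i_k,j_k}(x)$ are independent, which is the assertion.

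There is no genuine obstacle to anticipate here, as the result is a corollary in the literal sense: the only computation with any content, namely the identification of the Jacobian's columns with entries of the adjugate, was already carried out in Lemma~\ref{P1}. The sole point worth stating explicitly is the degree bound on the $p_{i_k,j_k}(x)$, which is what makes the passage between polynomials and vectors in $\reals^n$ an isomorphism rather than merely an injection into a larger space.
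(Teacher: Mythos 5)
Your proposal is correct and takes essentially the same route as the paper, which states Lemma~\ref{P2} as an immediate corollary of Lemma~\ref{P1} without further argument: the columns of $-J'$ are exactly the coefficient vectors of the $p_{i_k,j_k}(x)$, so nonsingularity of $J'$ is equivalent to linear independence of those polynomials. Your explicit mention of the degree bound and the coordinate isomorphism merely fills in the routine details the paper leaves implicit.
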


The next result was inspired by a careful reading
of a proof of Pereira,~\cite[Theorem 2.2]{P}, and in particular,
trying to determine a converse of Pereira's result.  

\begin{theorem}\label{big} Let $\A$ be a $n \times n$ pattern.
Suppose that one can
show that $\A$ is SAP using the Nilpotent-Jacobian method.
If $N$ is the nilpotent matrix in $Q(\A)$ used in the Nilpotent-Jacobian
method, then $N$ has index $n$.
\end{theorem}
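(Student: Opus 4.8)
The plan is to prove the contrapositive: assume $N$ has index $r < n$ and show that the Jacobian matrix $J'$ must be singular, so the Nilpotent-Jacobian method cannot succeed with this $N$. By Lemma~\ref{P2}, it suffices to show that the polynomials $p_{i_1,j_1}(x),\ldots,p_{i_n,j_n}(x)$ are linearly dependent. Each of these is an entry of $\adj{xI-N}$, and since $N$ is $n\times n$, each $p_{i_k,j_k}(x)$ has degree at most $n-1$. So these $n$ polynomials live in the space of polynomials of degree at most $n-1$, which has dimension $n$; linear independence would force them to be a basis. The goal is therefore to produce a single linear relation (equivalently, a constraint forcing a nontrivial kernel) that holds whenever the index is deficient.

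The key idea is to use the structure of $\adj{xI-N}$ for a nilpotent matrix of index $r$. Since $N$ is nilpotent of index $n$ would give $p_N(x)=x^n$ and minimal polynomial $x^n$; but if the index is $r < n$, then the minimal polynomial is $x^r$, so $N$ satisfies $N^r = 0$. The first step I would take is to write $\adj{xI-N}$ explicitly using the identity $(xI-N)\adj{xI-N} = \det(xI-N)\,I = p_N(x)\,I = x^n I$. Combined with $N^r = 0$, one can expand $(xI-N)^{-1} = \sum_{i\ge 0} N^i x^{-(i+1)} = \sum_{i=0}^{r-1} N^i x^{-(i+1)}$ (a finite sum since $N^r=0$), and multiply through by $x^n$ to obtain
\[
\adj{xI-N} = x^n (xI-N)^{-1} = \sum_{i=0}^{r-1} N^i\, x^{\,n-1-i}.
\]
This is the crucial computation: every entry $p_{i,j}(x) = [\adj{xI-N}]_{j,i}$ is a polynomial whose coefficients are entries of $I, N, N^2, \ldots, N^{r-1}$, appearing only in the powers $x^{n-1}, x^{n-2}, \ldots, x^{n-r}$. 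In other words, each $p_{i_k,j_k}(x)$ has zero coefficient on every power $x^0, x^1, \ldots, x^{n-r-1}$.

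From here the dependence is immediate by a dimension count. If $r < n$, then there are at least $n - r \ge 1$ low-degree coefficients (namely those of $x^0,\ldots,x^{n-r-1}$) that vanish for every one of the $n$ polynomials. Thus all $n$ polynomials lie in a subspace of dimension $n-(n-r)=r < n$, namely the span of $\{x^{n-1},\ldots,x^{n-r}\}$. Any $n$ vectors in an $r$-dimensional space with $r<n$ are linearly dependent, so $\{p_{i_1,j_1}(x),\ldots,p_{i_n,j_n}(x)\}$ is linearly dependent. By Lemma~\ref{P2}, $J'$ is singular, contradicting the assumption that the Nilpotent-Jacobian method succeeded with $N$. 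Hence $N$ must have index $n$.

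The main obstacle, and the step deserving the most care, is justifying the closed form $\adj{xI-N} = \sum_{i=0}^{r-1} N^i x^{\,n-1-i}$ rigorously as a polynomial identity rather than through a formal power-series manipulation that a referee might question. The cleanest route is to verify directly that the right-hand side satisfies the defining relation $(xI-N)\left(\sum_{i=0}^{r-1} N^i x^{\,n-1-i}\right) = x^n I$: expanding the product telescopes, using $N^r = 0$ to kill the top term, and leaves exactly $x^n I$. Since $\adj{xI-N}$ is the unique matrix satisfying $(xI-N)M = p_N(x) I = x^n I$ whenever $xI-N$ is invertible over the field of rational functions (which it is, as $\det(xI-N)=x^n \neq 0$ as a polynomial), the two agree. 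I would also note that $p_N(x) = x^n$ here uses that $N$ is nilpotent of order $n$ regardless of its index, so the characteristic polynomial is always $x^n$ while only the minimal polynomial detects the index — this is precisely why the degree bound on the adjugate entries tightens from $n-1$ down to $n-1$ through $n-r$ and yields the dependence.
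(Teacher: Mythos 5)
Your proof is correct, and it takes a genuinely different route from the paper's. Both arguments hinge on Lemma~\ref{P2}, but the paper does not argue by contraposition: it computes the dimension of the matrix space $V=\operatorname{span}\{\adj{xI-N} \mid x\neq 0\}$ in two ways. On one hand $\dim V$ equals the index $k$ of $N$ (Pereira's observation that $V=\operatorname{span}\{N^i\}_{i=0}^{k-1}$); on the other hand the paper shows $V$ equals $S=\operatorname{span}\{E_0,\ldots,E_{n-1}\}$, where the $E_l$ are the coefficient matrices of $\adj{xI-N}$ viewed as a polynomial in $x$. Proving $\dim S=n$ requires expressing $\adj{xI-N}$ in terms of auxiliary matrices $D_1,\ldots,D_n$ tied to the basis $\{p_{i_1,j_1}(x),\ldots,p_{i_n,j_n}(x)\}$ and invoking the nonsingularity of $J'$ via Lemma~\ref{P1}, and proving $S\subseteq V$ requires solving a Vandermonde system; the conclusion is $k=\dim V=\dim S=n$. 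Your argument short-circuits all of this: the closed form $\adj{xI-N}=\sum_{i=0}^{r-1}N^i x^{n-1-i}$, which you rightly flag as the step needing care and justify properly as a polynomial identity via the defining relation $(xI-N)M=x^nI$ and uniqueness over $\reals(x)$, shows every entry of the adjugate lies in $\operatorname{span}\{x^{n-r},\ldots,x^{n-1}\}$, an $r$-dimensional space. So if $r<n$, the $n$ polynomials $p_{i_k,j_k}(x)$ cannot be linearly independent, and Lemma~\ref{P2} makes $J'$ singular. What your approach buys is brevity and transparency: a single dimension count on degree support, with no double inclusion, no $D_k$ or $E_l$ matrices, and no Vandermonde interpolation; it also pinpoints exactly how the index enters, namely through the forced vanishing of the low-order coefficients of the adjugate entries. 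What the paper's approach buys is the identity $V=S$ itself --- that the coefficient matrices of the adjugate span the same space as the adjugates evaluated at nonzero real points --- which stays closer to Pereira's framework and has some independent interest; your closed form is, implicitly, the same fact the paper uses when it asserts $\dim V=k$, just exploited at the level of polynomial entries rather than matrix spans.
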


\begin{proof} 
By our hypotheses, we can prove that the pattern $\A$ is SAP by
using the Nilpotent-Jacobian method.  Thus, by Theorem \ref{NJ},
we can find a nilpotent matrix $N \in Q(\A)$ that has the required
properties.  We wish to show that the index of $N$ is $n$.

As in~\cite[Theorem 2.2]{P}, we consider the vector space 
\[V = \operatorname{span}\{\adj{xI-N} ~|~ x \neq 0\}.\]
If $k$ is the index of $N$, then 
arguing as in~\cite{P}, we have $\dim V = k $ since 
\[\operatorname{span}\{\adj{xI-N} ~|~ x \neq 0\}
= \operatorname{span}\{(xI-N)^{-1} ~|~ x \neq 0\} 
= \operatorname{span}\{N^i\}_{i=0}^{k-1}
.\]
Our strategy is to show that $k = n$ by first finding
an $n$-dimensional vector space $S$, and then showing that 
$V = S$.

By Lemma~\ref{P2}, the polynomials $\{p_{i_1,j_1}(x),\ldots,p_{i_n,j_n}(x)\}$ form
a basis for $P_{n-1}$, the set of polynomials of degree at most $n-1$.

Let $W_x = \adj{xI-N}$.  Since each entry of $W_x$ is a polynomial
of degree at most $n-1$, each entry of $W_x$ can be rewritten in terms
of the basis  $\{p_{i_1,j_1}(x),\ldots,p_{i_n,j_n}(x)\}$.  In other words, we can
find matrices $D_1,\ldots,D_n$  with entries in $\mathbb{R}$ such that
\[W_x = p_{i_1,j_1}(x)D_1 + \cdots + p_{i_n,j_n}(x)D_n.\]

We claim that the matrices $\{D_1,\ldots,D_n\}$ are linearly independent.
In particular, 
the $(i_k,j_k)$-th entry of $W_x$ is $p_{i_k,j_k}(x)$, hence
the $(i_k,j_k)$-th entry of $D_k$ is $1$, but for all $(i_l,j_l)$ with
$l \neq j$, we have $(D_k)_{i_l,j_l} = 0$.  
As a consequence, the matrices must be linearly independent.

Each polynomial $p_{i_k,j_k}(x)$ can be written as 
\[p_{i_k,j_k}(x) = c_{k,0} + c_{k,1}x + \cdots + c_{k,n-1}x^{n-1}.\]
Thus, if $W_x  = p_{i_1,j_1}(x)D_1 + \cdots + p_{i_n,j_n}(x)D_n$, we 
can rewrite $W_x$ as 
\[W_x = x^{n-1}E_{n-1} + x^{n-2}E_{n-2} + \cdots + xE_1 + E_0\]
where 
\[E_l = c_{1,l}D_1 + c_{2,l}D_2 + \cdots + c_{n,l}D_n  ~~\mbox{
for $l=0,\ldots,n-1$.}\]
This is simply a mater of expanding $W_x$ and regrouping.

Let $S=\operatorname{span}\{E_0,\ldots,E_{n-1}\}$.

We now claim that the matrices $\{E_0,\ldots,E_{n-1}\}$ are linearly 
independent. Suppose 
that $a_0E_0 + \cdots +a_{n-1}E_{n-1} = 0$ (where $0$ denotes
the zero matrix of size $n$).
Note that this would imply that $b_1D_1 + \cdots + b_nD_n = 0$
for 
\[
\begin{bmatrix}b_1 & b_2 & \cdots & b_n
\end{bmatrix}
=
-\begin{bmatrix}a_{n-1} & a_{n-2} & \cdots & a_0\end{bmatrix}
J'
\]
where, by Lemma~\ref{P1}, $J'$ is the nonsingular Jacobian matrix. 
So $a_0E_0 + \cdots +a_{n-1}E_{n-1} = 0$ if and only if $a_0 = \cdots = a_{n-1}
= 0$.
Hence the matrices are linearly independent and $\dim S=n$.

For each nonzero $x$, we have shown that $W_x = \adj{xI - N}$
can be written has $W_x = x^{n-1}E_{n-1} + \cdots + xE_1 + E_0 \in S$.
Hence $V =\operatorname{span}\{\adj{xI-N} ~|~ x \neq 0\} \subseteq  S$. 

We now show that $S \subseteq V$.   Take $M \in S$.  So, there exists 
constants $r_1,\ldots,r_n$ such that 
\begin{equation}\label{eqn1}
M = r_0E_0 + \cdots + r_{n-1}E_{n-1}.
\end{equation}
We want to show that $M \in V$, i.e., it is enough to show
that we can find constants $d_1,d_2,\ldots,d_n$ 
and nonzero constants $z_1,z_2,\ldots,z_n$ 
such that
\begin{equation}\label{eqn2}
M = d_1W_{z_1} + d_2W_{z_2} + \cdots + d_nW_{z_n} ~~ \mbox{with
$W_{z_i} = \adj{z_iI-N}$} 
\end{equation}
As noted above, each matrix $W_{z_i}$ can be written as
\[W_{z_i} = z_i^{n-1}E_{n-1} + z_i^{n-2}E_{n-2} + \cdots + z_iE_1 + E_0.\]
So, if we expand out ($\ref{eqn2}$) and then compare to
($\ref{eqn1}$), we need to be able to solve 
for constants $d_1,d_2,\ldots,d_n$ 
and nonzero constants $z_1,z_2,\ldots,z_n$
in the system 
\begin{eqnarray*}
d_1z_1^{n-1} + d_2z_2^{n-1} + \cdots + d_nz_n^{n-1} & = & r_{n-1} \\
d_1z_1^{n-2} + d_2z_2^{n-2} + \cdots + d_nz_n^{n-2} & = & r_{n-2} \\
& \vdots & \\
d_1z_1 + d_2z_2 + \cdots + d_nz_n & = & r_1 \\
d_1 + d_2 + \cdots + d_n & = & r_0. \\
\end{eqnarray*}
But this system of equations is a Vandermonde system of equations,
i.e.,
\[
\begin{bmatrix}
z_1^{n-1} & z_2^{n-1} & \cdots & z_n^{n-1} \\
z_1^{n-2} & z_2^{n-1} & \cdots & z_n^{n-1} \\
\vdots & \vdots & \vdots & \vdots \\
z_1 & z_2 & \cdots & z_n \\
1 & 1 & \cdots & 1 \\
\end{bmatrix}
\begin{bmatrix}
d_1 \\
d_2 \\
\vdots\\
d_{n-1} \\
d_n
\end{bmatrix}
= 
\begin{bmatrix}
r_{n-1} \\
r_{n-2} \\
\vdots \\
r_1 \\
r_0
\end{bmatrix}.\]
Because the determinant of the Vandermonde matrix is given by
$\prod_{1 \leq i < j \leq n} (z_j -z_i)$, 
the 
system can be solved for
$d_1,d_2,\ldots,d_n$ 
given any distinct choices of $z_1,\ldots,z_n$.
Therefore $M\in V$ and $S\subseteq V$.

We have thus shown that the vector spaces $S$ and $V$ are the
same, and 
$k=\dim V=\dim S =n$. Therefore
the index of the nilpotent matrix $N$ is $n$.
\end{proof}

Part of our interest in this result is the following corollary:

\begin{corollary}
Let $\A$ be an $n \times n$ sign pattern.  If $Q(\A)$ 
has no nilpotent matrix of index $n$, then the Nilpotent-Jacobian
method cannot be used to prove that $\A$ is SAP.
\end{corollary}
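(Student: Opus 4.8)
The plan is to prove the contrapositive, which is an immediate consequence of Theorem~\ref{big}. The contrapositive reads: if the Nilpotent-Jacobian method can be used to show that $\A$ is SAP, then $Q(\A)$ contains a nilpotent matrix of index $n$.

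First I would assume that the Nilpotent-Jacobian method succeeds in proving that $\A$ is SAP. By the description of the method, this means there is a nilpotent matrix $N \in Q(\A)$ (with at least $n$ prescribed nonzero entries) for which the associated Jacobian $J'$ from line~$(\ref{Jacob})$ is nonsingular. I would then apply Theorem~\ref{big} to this very matrix $N$, which forces its index to be exactly $n$. Hence $N$ is a nilpotent matrix in $Q(\A)$ of index $n$, and the contrapositive is established. Reversing the contrapositive gives the corollary: if $Q(\A)$ has no nilpotent matrix of index $n$, then no admissible $N$ can satisfy the conclusion of Theorem~\ref{big}, so the method cannot succeed.

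I do not expect any genuine obstacle, since all of the substantive work has already been carried out in Theorem~\ref{big}; the corollary is a purely logical restatement by contraposition. The only point worth monitoring is the handling of the quantifiers: the hypothesis that $Q(\A)$ has \emph{no} nilpotent matrix of index $n$ excludes every candidate at once, so it suffices to observe that any single $N$ capable of driving the method would, by Theorem~\ref{big}, necessarily have index $n$, contradicting this hypothesis.
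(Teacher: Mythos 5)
Your proof is correct and matches the paper's intent exactly: the paper states this corollary without proof as an immediate consequence of Theorem~\ref{big}, and your argument by contraposition is precisely that consequence. Your handling of the quantifiers is also sound, since Theorem~\ref{big} applies to any candidate nilpotent matrix used in the method.
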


We have shown that it is a necessary condition that an $n\times n$
pattern $\A$ allow a nilpotent matrix of index $n$ in order for
the Nilpotent-Jacobian method to successfully 
demonstrate that $\A$ is spectrally arbitrary. 
Pereira~\cite[Theorem 2.2]{P} showed that if a pattern $\A$ has
at most $n-2$ zero entries, all of which are on the diagonal, 
then it is sufficient to check if $\N$ allows full index to
demonstrate a pattern is spectrally arbitrary. As noted
already by Pereira~\cite[Example 1.1]{P}, allowing full index is not sufficient
for a pattern to be spectrally arbitrary. 

%
%
%


\begin{corollary}
The pattern $\A_2(\Z_s)$ allows full index for all $s\geq 3$.
\end{corollary}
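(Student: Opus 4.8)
The plan is to reduce the statement to the corresponding fact about the star pattern itself and then invoke Corollary~\ref{starfull}. Recall that $\A_2(\Z_s)$ has order $n=2s+1$, so ``allows full index'' means that $Q(\A_2(\Z_s))$ contains a nilpotent matrix of index $2s+1$. Corollary~\ref{starfull} tells us precisely that this follows once we know that $\Z_s$ allows full index $s$; equivalently, by Theorem~\ref{allowindex} with $k=s$ and $m=2$, a nilpotent realization $N\in Q(\Z_s)$ of index $s$ produces $A_2(N)$ of index $2s+1$. Thus the whole task collapses to showing that the star pattern $\Z_s$ allows full index $s$ for every $s\geq 3$.

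For this crux I would give the following route. The star pattern $\Z_s$ is spectrally arbitrary for all $s\geq 3$ by~\cite{MTvdD}, and this is established by the Nilpotent-Jacobian method. Theorem~\ref{big} then applies verbatim to $\Z_s$: the nilpotent matrix in $Q(\Z_s)$ employed there must have index equal to the order of the pattern, namely $s$. Hence $\Z_s$ allows full index $s$, and Corollary~\ref{starfull} finishes the argument. More self-containedly, I would instead construct an explicit full-index nilpotent in $Q(\Z_s)$: writing such a matrix as a bordered diagonal matrix with $(1,1)$ entry $0$, distinct nonzero diagonal entries $d_2,\ldots,d_s$ on the remaining diagonal, and nonzero border entries $b_i,c_i$, its characteristic polynomial is $x\prod_{i=2}^s(x-d_i)-\sum_{i=2}^s b_ic_i\prod_{j\neq i}(x-d_j)$. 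Forcing this to equal $x^s$ requires $\sum_{i=2}^s d_i=0$ (to cancel the $x^{s-1}$ term) and then a nonsingular Lagrange-type linear system in the products $b_ic_i$; for $s\geq 3$ one can pick distinct nonzero $d_i$ summing to zero and solve for nonzero products $b_ic_i$. The matrices $Z$ appearing in Examples~\ref{E1}--\ref{E3} are exactly such realizations for $s=3,4,5$.

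The main obstacle is this crux step, establishing that $\Z_s$ allows full index $s$. It is worth stressing that spectral arbitrariness by itself does \emph{not} deliver this: being spectrally arbitrary only forces the existence of \emph{some} nilpotent realization (characteristic polynomial $x^s$), not one with a single Jordan block, so one genuinely needs either the Nilpotent-Jacobian connection of Theorem~\ref{big} or a construction that controls the Jordan structure. In the direct approach the one delicate point is guaranteeing that the solved products $b_ic_i$ are all nonzero, so that the pattern $\Z_s$ is realized in exactly its prescribed positions; this can be arranged by a generic choice of the $d_i$, and the single-Jordan-block (non-derogatory) property — equivalently minimal polynomial $x^s$, equivalently index $s$ — then follows from the distinctness of the $d_i$ together with the nonvanishing of the border entries, a standard feature of arrowhead matrices.
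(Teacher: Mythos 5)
Your primary argument is exactly the paper's proof: cite the fact that \cite{MTvdD} established spectral arbitrariness of the star patterns $\Z_s$ via the Nilpotent-Jacobian method, apply Theorem~\ref{big} to conclude that $\Z_s$ allows full index $s$, and then invoke Corollary~\ref{starfull} to lift this to $\A_2(\Z_s)$. The self-contained arrowhead construction you sketch as an alternative is also sound (and your caveat that spectral arbitrariness alone does not force a full-index nilpotent realization is precisely why Theorem~\ref{big} is needed), but the paper itself stops at the first, citation-based route.
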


\begin{proof} Let $s\geq 3$.
We note that 
the Nilpotent-Jacobian method was employed in~\cite[Theorem 4.4]{MTvdD} 
to show that superpatterns of certain star patterns (including $\Z_s$) were spectrally arbitrary.
Hence by Theorem~\ref{big}, $\Z_s$ allows full index. The result now follows from
Corollary~\ref{starfull}.
\end{proof}

\section{Concluding Comments and Open Problems}

We have shown that the pattern $\A_2(\Z_s)$ is spectrally arbitrary for all $s=3,4,$ and $5.$
We expect that the
pattern $\A_2(\Z_s)$ is spectrally arbitrary for all $s>5$, but we leave this as an open question.
Given $s\geq 3$, we observed that $A_m(\Z_s)$ is potentially
nilpotent for $m\geq 2$ and that $A_m(\Z_s)$ allows full index for $m=2$. It would be interesting to determine if
such patterns can allow full index when $m>2$ and if any such pattern is spectrally arbitrary.

In this paper we have demonstrated that
 some classes of tree patterns (as well as some other constructions)
 are potentially nilpotent. It would be interesting 
to eventually classify potentially nilpotent tree patterns, or even
potentially nilpotent balanced tree patterns. This may be a difficult problem, since even determining
potential nilpotence for the tridiagonal
pattern $\T_n$ is still open for $n>16$. 

We have noted that a couple of the balanced tree patterns
are spectrally arbitray. Much work could be done 
to develop more tools to determine what makes a balanced tree pattern
spectrally arbitrary. Such tools might shed some light
on the tridiagonal
pattern $\T_n$, to determine if $\T_n$ is spectrally
aribitrary for $n>16$.

It is an open question  (see, for example~\cite{Y}) whether every irreducible pattern that is
spectrally arbitrary can be shown to be spectrally arbitrary
using the Nilpotent-Jacobian method. Theorem~\ref{big} suggests
that one way to answer the question in the negative is to find a spectrally
arbitrary pattern which does not allow a nilpotent matrix
of full index. On the other hand, one might ask if every
spectrally arbitrary pattern allows a nilpotent matrix of full index.

\noindent
{\bf Acknowledgments.} 
The research was supported in part by
an NSERC USRA and NSERC Discovery grants.
The third author
thanks Redeemer University College for its hospitality while working
on this project.

\end{document}